\newcommand{\Ext}{\operatorname{Ext}}
\newcommand{\imm}{\operatorname{Im}}
\newcommand{\wb}{\mathbf w}
\newcommand{\Nuc}{\mathcal O}
\newcommand{\Gb}{\mathbf G}
\newcommand{\supp}{\operatorname{supp}}
\newcommand{\verma}[1]{\mathtt{M}(#1)}
\newcommand{\vermaopp}[1]{\overline{\mathtt{M}}(#1)}
\newcommand{\smp}[1]{\mathtt{L}(#1)}
\newcommand{\vermasub}[1]{\mathtt{N}(#1)}
\newcommand{\usrl}[1]{\mathtt{T}(#1)}
\newcommand{\usrldos}[2]{\mathtt{S}_{#2}(#1)}
\newcommand{\indescdos}[3]{\mathtt{P}_{#2 , #3}(#1)}
\newcommand{\vermados}[3]{\mathtt{M}_{#2, #3}(#1)}
\newcommand{\Mtt}{\mathtt{M}}
\newtheorem{theorem}{Theorem}[section]
\newtheorem{lemma}[theorem]{Lemma}
\newtheorem{prop}[theorem]{Proposition}
\theoremstyle{definition}
\newtheorem{definition}[theorem]{Definition}
\newtheorem*{claim}{Claim}
\newtheorem{case}{Case}
\theoremstyle{remark}
\newtheorem{remark}[theorem]{Remark}
\newtheorem{step}{Claim}
\newcommand{\pf}{\begin{proof}}
\newcommand{\epf}{\end{proof}}
\newcommand{\spl}{\mathfrak{sl}}
\newcommand{\ku}{ \Bbbk}
\newcommand{\kut}{ \ku^{\times}}
\newcommand{\stt}{\mathtt{s}}
\newcommand{\wtt}{\mathtt{w}}
\newcommand{\I}{\mathbb I}
\newcommand{\N}{\mathbb N}
\newcommand{\Z}{\mathbb Z}
\newcommand{\zt}{\xi}
\newcommand{\zn}[2]{z_{#1, #2}}
\newcommand{\znp}[2]{z_{(#1, #2)}}
\newcommand{\tzn}[2]{\mathtt{z}_{(#1, #2)}}
\newcommand{\tz}{\mathtt{z}}
\newcommand{\cO}{\mathcal{O}}
\newcommand{\cE}{\mathcal{E}}
\newcommand{\cC}{\mathcal{C}}
\newcommand{\D}{\mathcal{D}}
\newcommand{\Pc}{{\mathcal P}}
\newcommand{\Ss}{{\mathcal S}}
\newcommand{\End}{\operatorname{End}}
\newcommand{\id}{\operatorname{id}}
\newcommand{\GK}{\operatorname{GKdim}}
\newcommand{\Irr}{\operatorname{irrep}}
\newcommand{\IRR}{\operatorname{Irrep}}
\newcommand{\Ind}{\operatorname{Ind}}
\newcommand{\Res}{\operatorname{Res}}
\newcommand{\ot}{\otimes}
\newcommand{\yd}[1]{{}^{ #1 }_{ #1 }\mathcal{YD}}
\newcommand{\Lmod}[1]{{}_{ #1 \hspace{-2pt}}\operatorname{Mod}}
\newcommand{\lmod}[1]{{}_{ #1 \hspace{-2pt}}\operatorname{mod}}
\newcommand{\grLmod}[1]{{}_{ #1 }\mathcal{SG}}
\newcommand{\hwmod}{\mathfrak O}
\newcommand{\hw}{\operatorname{hw}}
\newcommand{\hwrank}{\operatorname{hw-rk}}
\newcommand{\lwmod}{\mathfrak o}
\newcommand{\lw}{\operatorname{lw}}
\newcommand{\lwrank}{\operatorname{lw-rank}}
\newcommand{\dual}{\vee}
\newcounter{tabla}\stepcounter{tabla}
\begin{document}

\title[Representations of the double of the Jordan plane]
{On the finite-dimensional representations of the double of the Jordan plane}

\author[Andruskiewitsch]{Nicol\'as Andruskiewitsch}
\address[N. Andruskiewitsch]{Facultad de Matem\'atica, Astronom\'ia y F\'isica,
Universidad Nacional de C\'ordoba. CIEM -- CONICET. 
Medina Allende s/n (5000) Ciudad Universitaria, C\'ordoba, Argentina}
\email{nicolas.andruskiewitsch@unc.edu.ar}

\author[Pe\~na Pollastri]{H\'ector Mart\'\i n Pe\~na Pollastri}
\address[H.~Pe\~na Pollastri]{On leave from Facultad de Matem\'atica, Astronom\'ia y F\'isica, Universidad Nacional de C\'ordoba. CIEM -- CONICET. 
Medina Allende s/n (5000) Ciudad Universitaria, C\'ordoba, Argentina}
\email{hpollastri@famaf.unc.edu.ar}

\address{\hspace{-20pt} Department of Mathematics, Indiana University, Bloomington, IN 47405 USA}
\email{hpenapol@iu.edu}

\thanks{N. A. and H. M. P. P. were partially supported by CONICET (PIP  11220200102916CO),
FONCyT-ANPCyT (PICT-2019-03660) and Secyt (UNC). H. M. P. P. was also supported by the Simons Foundation Award 889000 as part of the Simons Collaboration on Global Categorical Symmetries.}

\keywords{Hopf algebras, Nichols algebras, Gelfand-Kirillov dimension.\\MSC2020: 16T05, 16T20, 17B37, 17B62.}

\begin{abstract}
We continue the study of the  
Drinfeld double  of the Jordan plane, denoted by $\mathcal D$ and introduced in \cite{ap}. 
 The  simple finite-dimensional modules  were computed in \cite{adp};  it turns out that they factorize through $U(\spl_2(\Bbbk))$. 
Here we introduce the Verma modules and  the category $\mathfrak O$ for $\mathcal D$, 
 which have a resemblance to the similar ones  in Lie theory   but induced from indecomposable modules
 of  the 0-part of the triangular decomposition. 
 Accordingly, there is the notion of highest weight rank (hw-rk). 
 We classify the indecomposable modules of hw-rk  one and find families of hw-rk two. 
 The Gabriel quiver of $\mathcal D$ is  computed implying that it has a wild representation type.
\end{abstract}

\maketitle
\setcounter{tocdepth}{1}
\tableofcontents

\section{Introduction}

Let $\ku$ be an algebraically closed field of characteristic $0$.
There were substantial advances in the ongoing classification of Nichols algebras over abelian groups with finite Gelfand-Kirillov dimension (abbreviated as $\GK$), an important step towards the classification of Hopf algebras with finite $\GK$, see  \cite{aah-triang,aam,angiono-garcia}. Recall that Nichols algebras of diagonal type  correspond to 
infinitesimal braidings that are semisimple as Yetter-Drinfeld modules (over abelian groups). 
In the context of finite $\GK$, Nichols algebras  
with non-semisimple infinitesimal braidings appear naturally. 
The well-known Jordan plane, denoted by $J$,  is a paradigmatic example.

\medbreak
Quantized enveloping algebras can be defined via Drinfeld doubles 
of (bosonizations of) suitable Nichols algebras of diagonal type. 
A natural question is to investigate the behaviour of the Drinfeld doubles 
of other Nichols algebras, specifically those with non-semisimple infinitesimal braidings. 
In this direction,  we studied in \cite{ap,adp} a Hopf algebra $\D$ 
thought of as the Drinfeld double of the bosonization $J \# \ku \Z$, which differs from quantized enveloping algebras
for some distinctive features:

 \medbreak
\begin{itemize}[leftmargin=*]\renewcommand{\labelitemi}{$\circ$}
\item\cite{ap}  The Hopf algebra paired with  $J \# \ku \Z$ in order to build the double 
has the form $J^d \# U(\mathfrak h)$
where $J^d$ is the graded dual of  $J$ and $\dim \mathfrak h = 1$.

 \medbreak
\item \cite{ap}
The Hopf algebra $\D$ is not simple and, in fact, it
fits into an \emph{abelian} exact sequence of Hopf algebras 
$\xymatrix{ \cO(\Gb)  \ar@{^{(}->}[r]  & \D \ar@{->>}^{\pi\qquad}[r]  & U(\spl_2(\ku)) }$
where $\cO(\Gb)$ is the algebra of regular functions on $\Gb = (\Gb_a \times \Gb_a) \rtimes \Gb_m$.

 \medbreak
\item \cite{ap}
The analogue of the Cartan subalgebra of $\D$ splits as $\D^0 \simeq U(\mathfrak h) \otimes \ku \Gamma$. 

 \medbreak
\item \cite{adp} 
The finite-dimensional simple modules over $\D$ are in bijective correspondence with those of $U(\spl_2(\ku))$ via $\pi$. 
\end{itemize} 

 \medbreak
The next  step is to understand the indecomposable modules. 
We observe that the action of $\D^0$ is not semisimple, what leads us to introduce 
Verma modules inducing from indecomposable $\D^0$-modules; highest weight modules
and the category $\hwmod$   they belong to. 
The notions of $\hwrank$ and hw-series arise naturally. 
See Section \ref{sec:highest-weight-modules}.
 In Sections \ref{sec:hwr1} and \ref{sec:hwr2} 
 we study the indecomposable modules with $\hwrank$ one and two respectively. 
 In the case of $\hwrank = 1$ we get the complete classification by the family of uniserial indecomposable modules $\usrl{n,m}$, see Theorem \ref{thm:indecomposables-rank-1-class}. 
In Section \ref{sec:extensions-simple-modules} we classify extensions of simple modules
using the families  described previously. 
Thus we compute the Gabriel quiver of $\D$ and deduce that $\D$ has wild representation type, see Proposition \ref{prop:D-has-wild-rep-type}.
Section \ref{sec:preliminaries} is devoted to preliminaries on the algebra $\D$.

\medbreak
There are convincing reasons to believe that the representation theory of the Drinfeld doubles of many of the Nichols algebras discovered in \cite{aah-triang,aam} will have the features described above.

\subsection*{Conventions}
If $\ell < n \in\N_0$, we set $\I_{\ell, n}=\{\ell, \ell +1,\dots,n\}$, $\I_n = \I_{1, n}$. 
If $Y$ is a subobject of an object $X$ in a category $\cC$, then we write $X\leq Y$.

Let $A$ be an algebra. 
Given $a_1,\dots,a_n\in A$, $n\in\N$, $\ku\langle a_1,\dots,a_n\rangle$ denotes the subalgebra generated by $a_1,\dots,a_n$.
Let $\Lmod{A}$
(respectively $\lmod{A}$, $\Irr A$)  denote  the category of left $A$-modules
(respectively, the full subcategory of finite-dimensional ones, the set of isomorphism classes of  simple objects  in $\lmod{A}$).
Often we do not  distinguish a class in $\IRR A$ and one of its representatives. 
If $M \in \Lmod{A}$ and $m_1,\dots,m_n\in M$, $n\in\N$, then $\langle m_1,\dots,m_n\rangle$ 
denotes the submodule generated by $m_1,\dots,m_n$.
Given a subalgebra $B$, of $A$, $\Ind_{B}^{A}: \Lmod{B} \to \Lmod{A}$ and $\Res^{B}_A: \Lmod{A} \to \Lmod{B}$ denote the induction  and restriction functors, e.g. $\Ind_{B}^{A}(M) =  A \otimes_B M$. 

Let $L$ be a Hopf algebra. The kernel of the counit $\varepsilon$ is denoted $L^+$, the antipode (always assumed bijective) by $\Ss$, the
space of primitive elements by $\Pc(L)$ and the group of group-likes by $G(L)$.
The space of $(g,h$)-primitives is $\Pc_{g,h}(L) =\{x \in L: \Delta(x) = x\otimes h + g \otimes x\}$ where $g,h \in G(L)$.
The category of Yetter-Drinfeld modules over $L$ is denoted by $\yd{L}$.
We refer to \cite{Rad-libro} for unexplained terminology on Hopf algebras.

\section{Preliminaries}\label{sec:preliminaries}
\subsection{The double of the Jordan plane}\label{subsec:double}
The well-known Jordan plane, the quadratic algebra $J = \ku\langle x,y \vert xy - yx - \frac{1}{2}x^2\rangle$,
is a braided Hopf algebra where $x$ and $y$ are primitive. 
The Hopf algebra $\D$ was introduced in \cite[Proposition 2.3]{ap}, where it is denoted $\widetilde{D}$.
See \cite{ap,adp} for properties of $\D$, some of which are  listed below.
The algebra $\D$ 
is presented by generators $u$, $v$, $\zt$, $g^{\pm 1}$, $x$, $y$ and relations
\begin{align}
\label{eq:def-jordan1}
&\begin{aligned}
g^{\pm 1} g^{\mp 1}&= 1, &\zt  g &= g \zt, 
\end{aligned}
\\ \label{eq:def-jordan2}
&\begin{aligned}
gx &= xg, & gy &= yg +xg, & \zt  y &= y \zt  -2 y, &  \zt  x&=x \zt  - 2 x, 
\\
u g &= gu, & v g &=gv  + gu, & v\zt  &= \zt v  -2 v, &  u\zt  &= \zt u -2 u,
\end{aligned}
\\
\label{eq:def-jordan3}
&\begin{aligned}
yx &= xy - \frac{1}{2} x^2, & vu &= uv - \frac{1}{2}u^2,
\\
u x &=x u , &   v x &= x v  + (1-g) + xu, 
\\
u y &=yu  +(1 - g), &  v y &=y v + \frac{1}{2}g \zt  + y u. 
\end{aligned}
\end{align}
Notice that we have replaced the generator $\zeta$ by $\zt = -2\zeta$ in the presentation of \cite{ap} and adjusted the relations accordingly.
The  Hopf algebra structure is determined by 
$g\in G(\D)$, $u,\zt \in\Pc(\D)$, $x,y\in\Pc_{g,1}(\D)$ and 
\begin{align*}
\Delta( v) &=  v\ot 1 + 1\ot v -\frac{1}{2}\zt\ot u. 
\end{align*}

The following set  is a PBW-basis of $\D$:
\begin{align*}
B &= \{x^n\,y^r\,g^m\,\zt ^k\,u^i\,v^j: i,j,k,n,r\in\N_0, \quad m\in\Z\}.
\end{align*}

We shall consider the  following  subalgebras    of $\D$:
\begin{align*}
\D^{<0} &\coloneqq \ku \langle x, y\rangle, & 
\D^{0} &\coloneqq \ku \langle g^{\pm1}, \zt\rangle, &
 \D^{\leq0} &\coloneqq \ku\langle g^{\pm1}, \zt, x,y \rangle,
 \\ 
 \D^{>0} &\coloneqq \ku \langle u, v\rangle & &\text{and} &
\D^{\geq0} &\coloneqq \ku\langle g^{\pm1}, \zt, u, v\rangle.
\end{align*}

The algebra $\D$ has a  $\Z$-grading  $\D = \oplus_{n\in\Z}\D^{[n]}$ given by 
\begin{align}\label{eq:D-grading}
\deg x =\deg y = -2, && \deg u =\deg v = 2, && \deg g = \deg \zt = 0.
\end{align}

The algebra $\D$ has a triangular decomposition 
$\D \simeq \D^{<0} \otimes \D^0 \otimes \D^{>0}$.

\subsection{An exact sequence of Hopf algebras}
Let $\Nuc\coloneqq \ku\langle x,u,g^{\pm 1}\rangle$; this is a commutative Hopf subalgebra of $\D$, 
hence $\Nuc \simeq \cO(\Gb)$, 
where $\Gb$ is the algebraic group as in the Introduction.  
Let $e, f, h$  be  the Chevalley generators of  $\spl_2(\ku)$, i.~e.
$[e,f]=h$,  $[h,e]=2e$, $[h,f]=-2f$.
The Hopf algebra map $\pi \colon \D \to U(\spl_2(\ku))$ determined by
\begin{align}\label{eq:iso-hopf}
\pi (v)&= \tfrac{1}{2} e,& \pi(y) &= f  ,& \pi (\zt)&=  h,&
\pi (u)&= \pi(x) =  \pi (g-1) =0,
\end{align}
induces  an
isomorphism of Hopf algebras $\D/\D\Nuc^+ \simeq U(\spl_2(\ku))$.   
Thus we have an exact sequence as mentioned in the Introduction.

\subsection{Simple modules}\label{subsec:verma-simple}
The simple objets in $\lmod{\D}$ are classified in \cite{adp}; the starting point is
the following fact that we will use later.

\begin{prop}\label{prop:g-1-x-u-D-nilpotent} \cite[Proposition 3.9]{adp}
If $M\in \lmod{\D}$, then  $g-1$, $x$ and $u$ act  nilpotently on $M$. \qed
\end{prop}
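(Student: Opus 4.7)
I plan to treat the three elements in two stages: a weight-shift argument for $x$ and $u$, and a trace computation for $g-1$.

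First, since $\ku$ is algebraically closed and $M$ is finite-dimensional, $\zt$ acts on $M$ with a generalized eigenspace decomposition $M = \bigoplus_{\lambda \in \Lambda} M_{[\lambda]}$ for some finite $\Lambda \subset \ku$. The relations $\zt u = u\zt + 2u$ and $\zt x = x\zt - 2x$ from \eqref{eq:def-jordan2} immediately yield, for every $\lambda \in \ku$ and $m \geq 1$, the operator identities
\[
(\zt - (\lambda + 2))^m u = u(\zt - \lambda)^m \quad\text{and}\quad (\zt - (\lambda - 2))^m x = x(\zt - \lambda)^m.
\]
Consequently $u(M_{[\lambda]}) \subseteq M_{[\lambda+2]}$ and $x(M_{[\lambda]}) \subseteq M_{[\lambda-2]}$; since $\Lambda$ is finite, iterating forces $u^N = x^N = 0$ on $M$ for $N$ large enough.

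Next, for $g-1$, a weight argument is unavailable because $g$ is invertible. My plan is to exploit the relation $uy = yu + (1 - g)$ of \eqref{eq:def-jordan3}, equivalently $[u,y] = 1 - g$, to compute traces. For each $n \geq 1$, multiplying on the right by $g^{n-1}$ and taking traces on $M$ gives
\[
\Tr_M(g^{n-1}) - \Tr_M(g^n) = \Tr_M\bigl( (uy - yu) g^{n-1} \bigr) = \Tr_M(uy g^{n-1}) - \Tr_M(yu g^{n-1}).
\]
By cyclicity $\Tr_M(uy g^{n-1}) = \Tr_M(y g^{n-1} u)$, and the commutation $ug = gu$ from \eqref{eq:def-jordan2} lets us move $u$ past $g^{n-1}$, so $\Tr_M(y g^{n-1} u) = \Tr_M(y u g^{n-1})$. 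Thus the right-hand side vanishes, and by induction from $\Tr_M(1) = \dim M$ we obtain $\Tr_M(g^n) = \dim M$ for every $n \geq 0$.

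To conclude, let $\mu_1, \dots, \mu_s \in \ku^{\times}$ be the distinct generalized eigenvalues of $g|_M$, with multiplicities $d_1, \dots, d_s$. The equalities $\sum_i d_i \mu_i^n = \sum_i d_i$, valid for every $n \geq 0$, combined with the linear independence of the sequences $(\mu_i^n)_n$ for distinct $\mu_i$ (a Vandermonde argument), force $d_i = 0$ whenever $\mu_i \neq 1$. Hence $g$ has only $1$ as a generalized eigenvalue on $M$, i.e., $g - 1$ acts nilpotently. The main obstacle I anticipate is the middle step: the first stage is a routine weight count and the final one is elementary linear algebra, but the crucial manoeuvre is to pair $[u,y] = 1 - g$ with powers of $g$ and use $[u,g] = 0$ to collapse the two trace terms; without that pairing one only obtains $\Tr_M(g) = \dim M$, which is not enough to pin down the spectrum of $g$.
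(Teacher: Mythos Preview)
Your proof is correct. The paper itself does not give an argument for this proposition: it is stated with an immediate \qed and a citation to \cite[Proposition 3.9]{adp}, so there is no in-paper proof to compare against. Your argument is a clean self-contained substitute: the weight-shift step for $x$ and $u$ is the standard one (and is exactly the mechanism behind \eqref{eq:weight-action} later in the paper), while your trace computation for $g-1$---pairing $[u,y]=1-g$ with powers of $g$ and using $ug=gu$ to collapse the commutator trace, then finishing with a Vandermonde argument on the spectrum of $g$---is an efficient way to handle the group-like generator without appealing to the external reference.
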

Let $\smp{n}$ be the simple $\spl_2(\ku)$-module with highest weight $n$.
Then $\smp{n}$ becomes a simple $\D$-module  via the projection $\pi$ in \eqref{eq:iso-hopf}. Precisely,  
$\smp{n}$ has a basis $\{\zn{n}{0},\dots,\zn{n}{n}\}$   where the action is given by 
\begin{align}\label{eq:D-action-Ln}
\begin{aligned}
\zt\cdot \zn{n}{i} &=  (n-2i) \zn{n}{i}, & v \cdot \zn{n}{i} &= \tfrac{i (n-i+1)}{2}  \zn{n}{i-1}, & y \cdot \zn{n}{i} &= \zn{n}{i+1},
\\
x \cdot \zn{n}{i} &= 0, & u \cdot \zn{n}{i} &= 0,   & g\cdot \zn{n}{i} &= \zn{n}{i},
\end{aligned}
\end{align}
$i\in \I_{0,n}$ where $\zn{n}{-1} = \zn{n}{n+1} = 0$ by convention.

\begin{theorem}\label{th:Ln-simples} \cite[3.11]{adp}
The family $(\smp{n})_{n\in\N_0}$ 
parametrizes  $\Irr \D$. \qed
\end{theorem}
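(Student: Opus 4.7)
The plan is to show that every finite-dimensional simple $\D$-module is annihilated by $\Nuc^+$, so that it descends along $\pi$ to a simple module over $U(\spl_2(\ku))$, and then invoke the classical classification.

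First I would fix $M \in \Irr \D$ and consider the subspace
\[
M^{\Nuc} = \{m \in M : \Nuc^+ \cdot m = 0\}.
\]
Because $\Nuc$ sits inside the \emph{abelian} exact sequence $\Nuc \hookrightarrow \D \twoheadrightarrow U(\spl_2(\ku))$ recalled above, $\Nuc$ is a normal Hopf subalgebra of $\D$, and hence $M^{\Nuc}$ is a $\D$-submodule of $M$: for $m \in M^{\Nuc}$, $h\in\D$ and $n\in\Nuc^+$ one has $nh = \sum h_i n_i$ with $n_i \in \Nuc^+$ by normality, so $n(hm) = \sum h_i(n_i m) = 0$.

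The key step is to prove $M^{\Nuc} \neq 0$. Since $\Nuc = \ku\langle x,u,g^{\pm1}\rangle$ is commutative and $\Nuc^+$ is generated as an ideal by $x$, $u$ and $g-1$, a vector killed by these three elements is killed by all of $\Nuc^+$; hence
\[
M^{\Nuc} = \ker x \cap \ker u \cap \ker(g-1).
\]
By Proposition \ref{prop:g-1-x-u-D-nilpotent}, each of $x$, $u$, $g-1$ acts nilpotently on $M$, and the relations \eqref{eq:def-jordan2}--\eqref{eq:def-jordan3} show that they pairwise commute as operators on $M$. A standard inductive argument on finite-dimensional vector spaces shows that pairwise commuting nilpotent operators admit a common kernel: start with $\ker x \ne 0$, observe that $u$ and $g-1$ preserve $\ker x$ and are still nilpotent there, and iterate. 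Therefore $M^{\Nuc}\neq 0$.

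By simplicity of $M$, $M = M^{\Nuc}$, so $\Nuc^+\cdot M = 0$. Consequently the $\D$-action on $M$ factors through $\D/\D\Nuc^+ \simeq U(\spl_2(\ku))$ via $\pi$, and $M$ is simple as a $U(\spl_2(\ku))$-module. The classical classification of finite-dimensional simple $\spl_2(\ku)$-modules now implies $M\simeq \smp{n}$ for a unique $n\in\N_0$; the uniqueness is also immediate because $\dim \smp{n} = n+1$ distinguishes the members of the family. I expect the only subtle point to be articulating the normality of $\Nuc$ and the submodule property of $M^{\Nuc}$ cleanly; the nonvanishing of $M^{\Nuc}$ and the descent to $U(\spl_2(\ku))$ are then routine.
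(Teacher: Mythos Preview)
The paper does not prove this theorem; it is quoted from \cite[3.11]{adp} and closed with a \qed. Your argument is correct and matches what the paper signals: it calls Proposition~\ref{prop:g-1-x-u-D-nilpotent} ``the starting point'' of the classification in \cite{adp}, and the nilpotency of $x$, $u$, $g-1$ is precisely the nontrivial input you use. The structure---$M^{\Nuc}\neq 0$ via commuting nilpotents, $M^{\Nuc}$ a $\D$-submodule by normality of $\Nuc$ (equivalently $\D\,\Nuc^{+}=\Nuc^{+}\,\D$, which is part of the meaning of the exact sequence), hence $M=M^{\Nuc}$ by simplicity, hence the action factors through $\pi$ to a simple $U(\spl_2(\ku))$-module---is the natural way to exploit the exact sequence, and each step is sound.
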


\begin{remark}\label{rem:locally-nilpotent}
If $M\in \lmod{\D}$, then   $y$ and $v$ act  nilpotently on $M$ and the eigenvalues of the action of $\zt$ are integers, but the action of $\zt$ is not necessarily semisimple
(consider a Jordan-H\"older series of $M$ and apply Theorem \ref{th:Ln-simples}).
\end{remark}

\section{Highest weight modules}\label{sec:highest-weight-modules}

 \subsection{Weight decompositions} Proposition \ref{prop:g-1-x-u-D-nilpotent}
 and Remark \ref{rem:locally-nilpotent}   lead us to the following considerations. 
 Given $P\in \Lmod{\D^0}$ and $n\in \Z$, we set  
 \begin{align*}
 P^{(n)} &= \{m\in P:  (\zt  - n)^a \cdot m = 0, \ (g -1)^b \cdot m = 0 \text{ for some } a,b\in \N_0 \}.
 \end{align*}
 By a standard argument, see e.~g. \cite[1.2.13]{dix}, the sum $\sum_{n\in \Z} P^{(n)}$ is direct.
 Let  $\grLmod{\D^0}$ be the  full subcategory of $\Lmod{\D^0}$ consisting of those
 $P\in \Lmod{\D^0}$  such that $P = \oplus_{n\in \Z} P^{(n)}$.
 Notice that $\lmod{\D_0}$ is a subcategory of $\grLmod{\D^0}$.
 
 \medbreak
 Given $M \in \Lmod{\D}$, we set  by abuse of notation
 \begin{align*}
 M^{(n)} =  \big(\Res ^{\D^0}_{\D} (M) \big)^{(n)}.
 \end{align*}  
 The  weights of $M \in \Lmod{\D}$ are the elements of  
$\varPi(M) \coloneqq \{n\in\Z: M^{(n)} \neq 0\}$;
 the $M^{(n)}$'s are called weight subspaces even when they are 0.
 
 \begin{definition}
 A module $M \in \Lmod{\D}$ is \emph{suitably graded} if
 \begin{align*}
 M &=\oplus_{n\in\Z} M^{(n)} && \text{and} & \dim M^{(n)} &< \infty & \text{for all } n&\in\Z.
 \end{align*}
 Let $\grLmod{\D}$ be the  full subcategory of $\Lmod{\D}$  of suitably graded modules.
 \end{definition}  
 
 \begin{remark}
 \begin{enumerate}[leftmargin=*,label=\rm{(\roman*)}]
 \item 
 From the defining relations we get that 
 \begin{align}\label{eq:weight-action}
 \begin{aligned}
 x\cdot M^{(n)} &\subset M^{(n-2)} \supset y\cdot M^{(n)}, & g\cdot M^{(n)} &= M^{(n)} \supset \zt \cdot M^{(n)}, \\
 u\cdot M^{(n)} &\subset M^{(n+2)} \supset v\cdot M^{(n)}, & n &\in \Z.
 \end{aligned}
 \end{align}
 
 \medbreak
 \item Morphisms of $\D$-modules preserve the weight subspaces. 
Thus submodules and quotients of suitably graded modules are suitably graded.
Given an exact sequence of  $\D$-modules  $\xymatrix@C=0.5cm{ N \ar@{^{(}->}[r]   & M  \ar@{->>}[r] & S}$,
the sequence $\xymatrix@C=0.5cm{ N ^{(n)}\ar@{^{(}->}[r]   & M^{(n)}  \ar@{->>}[r] & S^{(n)}}$ of  $\D^0$-modules
 is exact   for any $n\in \Z$.

 \medbreak
 \item\label{item:iii} Any $M \in \lmod{\D}$ is suitable graded by Remark \ref{rem:locally-nilpotent}.
 If $n \in \varPi(M)$,  then 
 \begin{align*}
 \dim M^{(n)} &= \dim M^{(-n)}& &\text{ and }& \{n, n-2, n-4, \dots, 2-n, -n\} &\subseteq \varPi(M).
 \end{align*}
 \end{enumerate}
 \end{remark}
 
 \pf  Let $\lambda \in \ku$. Then $(\zt - \lambda + 2)y = y(\zt - \lambda)$ by \eqref{eq:def-jordan2}; hence  by induction
 $(\zt - \lambda + 2)^ay = y(\zt - \lambda)^a$ for any $a\in \N$. 
 Also, $(g-1)y = y(g-1) + gx$, hence  by induction
 $(g-1)^ay = y(g-1)^a +  axg(g-1)^{a-1}$ for any $a\in \N$. 
 Therefore 
 $y\cdot M^{(n)} \subset M^{(n-2)}$; the rest is similar.
 The proof of \ref{item:iii} follows from the representation theory of $\spl_2$.
 \epf
 
 Let $M\in \grLmod{\D}$. Given $n\in \Z$, one identifies the dual $\big(M^{(n)}\big)^*$
with the subspace  $\big(\oplus_{n \neq a\in\Z} M^{(a)} \big)^{\perp} = \{f\in M^*: f_{\vert M(a)} = 0, a \neq n\}$
of $M^*$.   Clearly the sum of the various duals $\big(M^{(n)}\big)^*$ is direct. The graded dual of $M$ is
\begin{align*}
M^{\dual}  &= \oplus_{n\in\Z} \big(M^{(n)}\big)^* \hookrightarrow M^*.
\end{align*} 
 We need the formula for the antipode:
\begin{align}\label{eq:antipode}
\begin{aligned}
\Ss(g) &= g^{-1}, & \Ss(x) &= -g^{-1}x, & \Ss(y) &= -g^{-1}y, \\
\Ss(\xi) &= -\xi, & \Ss(u) &= -u, &\Ss(v) &= -v -\frac{1}{2} \xi u.
\end{aligned}
\end{align}

 \begin{lemma}\label{lema:graded-dual}
 If $M  \in \grLmod{\D}$, then $M^{\dual}\in \grLmod{\D}$
 and 
 \begin{align}\label{eq:graded-dual}
(M^{\dual})^{(n)} &= \big(M^{(-n)}\big)^*,& n&\in \Z.
 \end{align}
 \end{lemma}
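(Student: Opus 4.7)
The plan is to exploit the fact that both the $\Z$-grading on $\D$ from \eqref{eq:D-grading} and the antipode $\Ss$ are compatible with the weight decomposition: from \eqref{eq:weight-action} we have $\D^{[d]}\cdot M^{(n)}\subseteq M^{(n+d)}$, and a quick inspection of \eqref{eq:antipode} shows that $\Ss$ preserves the $\Z$-grading of $\D$ (each antipode formula yields an element of the same degree as the original generator). This gives us the "shift rule" we need for the dual action.

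First I would verify that $M^{\dual}$ is a $\D$-submodule of $M^*$. Fix $f\in (M^{(k)})^*$, viewed inside $M^*$ by extension by zero, and $h\in \D^{[d]}$. For $m\in M^{(n)}$, $(h\cdot f)(m)=f(\Ss(h)m)$, and since $\Ss(h)\in \D^{[d]}$ we have $\Ss(h)m\in M^{(n+d)}$. Thus $(h\cdot f)(m)$ can only be nonzero when $n+d=k$, i.e. $m\in M^{(k-d)}$, so $h\cdot f\in (M^{(k-d)})^*\subset M^{\dual}$. In particular the $\D$-action permutes the summands $(M^{(k)})^*$ in a way governed by the grading.

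Next I would show that $(M^{(-n)})^*\subseteq (M^{\dual})^{(n)}$. Take $f\in (M^{(-n)})^*$. A direct computation gives
\begin{align*}
\big((\zt-n)^a\cdot f\big)(m) &= (-1)^a f\big((\zt+n)^a m\big), &
\big((g-1)^b\cdot f\big)(m) &= f\big((g^{-1}-1)^b m\big),
\end{align*}
using $\Ss(\zt)=-\zt$ and $\Ss(g)=g^{-1}$. If $m\in M^{(a)}$ with $a\ne -n$, both right-hand sides vanish since $f$ vanishes on $M^{(a)}$ and the operators $\zt+n$, $g^{-1}-1$ preserve weight spaces. If $m\in M^{(-n)}$, then because $M^{(-n)}$ is finite-dimensional and $\zt+n$, $g^{-1}-1=-g^{-1}(g-1)$ act nilpotently on it, there exist uniform $a,b$ for which $(\zt+n)^a$ and $(g^{-1}-1)^b$ annihilate $M^{(-n)}$. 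Hence $(\zt-n)^af = 0 = (g-1)^bf$, so $f\in (M^{\dual})^{(n)}$.

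Finally I would assemble: by construction $M^{\dual}=\bigoplus_{k}(M^{(k)})^*$, and the previous inclusion says each summand $(M^{(k)})^*$ sits inside $(M^{\dual})^{(-k)}$. Since the sum $\sum_n (M^{\dual})^{(n)}$ is direct in any $\D^0$-module, comparing the two direct-sum decompositions of $M^{\dual}$ forces the inclusions to be equalities, i.e. $(M^{\dual})^{(n)}=(M^{(-n)})^*$. Finite-dimensionality of each weight space of $M^{\dual}$ then follows from $\dim (M^{(-n)})^* = \dim M^{(-n)}<\infty$, so $M^{\dual}\in \grLmod{\D}$. The only real obstacle is the bookkeeping in matching signs and keeping track of the shift $k\mapsto -k$ coming from the antipode; everything else is a routine consequence of the grading compatibility and the pointwise nilpotence built into the definition of $M^{(n)}$.
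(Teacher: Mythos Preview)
Your proof is correct and follows essentially the same approach as the paper: show that $M^{\dual}$ is a $\D$-submodule of $M^*$ via the shift rules, then verify $(M^{(-n)})^*\subseteq (M^{\dual})^{(n)}$ using the antipode formulas for $\zt$ and $g$ together with the finite-dimensionality of weight spaces, and finally compare the two direct-sum decompositions. The only cosmetic difference is that you package the shift rules by invoking the $\Z$-grading of $\D$ and the fact that $\Ss$ preserves it, whereas the paper checks the generators $x,y,u,v$ individually; this buys a little economy but the argument is the same.
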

 
\pf  Fix $q \in \Z$. Notice that $\big(M^{(q)}\big)^*$ is stable under the action of $g$ and $\zt$.
We  claim that  
\begin{align*} 
\begin{aligned}
x\cdot \big(M^{(q)}\big)^* &\subset \big(M^{(q+2)}\big)^* \supset y\cdot \big(M^{(q)}\big)^*, \\
u\cdot \big(M^{(q)}\big)^* &\subset \big(M^{(q-2)}\big)^* \supset v\cdot \big(M^{(q)}\big)^*.
\end{aligned}
\end{align*}
Indeed, if $f\in \big(M^{(q)}\big)^*$, $p\in \Z$ and $m \in M^{(p)}$, then
\begin{align*}
\langle x\cdot f, m \rangle &= -\langle f, x\cdot m \rangle  = 0 \text{ unless } x\cdot m\in M^{(q)}
\text{ that is } p- 2 = q.
\end{align*}
The other inclusions are similar. Thus $M^{\dual}$ is a submodule of $M^*$.
We next claim that $\big(M^{(q)}\big)^* \subset \big(M^*\big)^{(-q)}$.
Since $\dim M^{(q)} < \infty$,  
there exist $a,b \in \N$ such that  $(\zt - q)^a$  and $(g - 1)^b$ act by 0 on $M^{(q)}$. 
Pick $f\in \big(M^{(q)}\big)^*$. By \eqref{eq:antipode} we have, for any $p\in \Z$ and $m \in M^{(p)}$, that
\begin{align*}
\langle(\zt + q)\cdot f, m \rangle &= \langle f, (-\zt +q)\cdot m \rangle \\
\implies  \langle(\zt +q )^a\cdot f, m \rangle 
&= (-1)^a\langle f, (\zt - q)^a\cdot m \rangle =0;
\\
\langle(g -1)\cdot f, m \rangle &= \langle f, (g^{-1} - 1)\cdot m \rangle \\
\implies  \langle(g - 1)^b\cdot f, m \rangle 
&= (-1)^b\langle f, (g - 1)^b \cdot (g^{-b} \cdot m) \rangle =0
\end{align*}
The claim follows. Together with the first claim, this implies \eqref{eq:graded-dual}.
 \epf
 
 \begin{remark} \label{rem:tensor-product}
 If $M,N  \in \grLmod{\D}$, then $M\otimes N =\oplus_{n\in\Z} (M\otimes N)^{(n)}$ and 
\begin{align*}
 (M\otimes N)^{(n)} &= \oplus_{p+ q =n} M^{(p)}\otimes N^{(q)}.
\end{align*}
 \end{remark}
Notice however that  $\dim (M\otimes N)^{(p+q)}$ is not necessarily finite. 
 
 \medbreak
 \noindent\emph{Proof. } 
 It suffices to show that $M^{(p)}\otimes N^{(q)} \subset (M\otimes N)^{(p+q)}$ for any  $p,q\in \Z$; this follows from
 the equalities
 \begin{align*}
 \Delta (\zt - \lambda)^a &= \sum_{0\leq c \leq a} \binom{a}{c} (\zt - \mu)^c \otimes (\zt - \nu)^{a-c}, & \mu+ \nu = \lambda;
 \\
 \Delta (g - 1)^b &= \sum_{0\leq d \leq b} \binom{b}{d} (g-1)^d \otimes g^d(g - 1)^{b-d}, &a,b &\in \N. &&\qed
 \end{align*}

 \subsection{Highest weight modules}
 
Let us now  consider $P\in \lmod{\D^0}$  such that $P = P^{(n)}$ for some $n\in \Z$. 
 Then $P$ becomes a $\D^{\geq0}$-module by $u\cdot m = 0$, $v\cdot m = 0$, $m \in P$.
 We define the  Verma module
 \begin{align*}
 \verma{P} \coloneqq \Ind_{\D^{\geq0}}^{\D} (P) \simeq \D\ot_{\D^{\geq0}} P.
 \end{align*} 
 Thus $\verma{P}  \simeq \D^{<0}\ot_{\ku} P$ as  vector spaces\footnote{Notice that the Verma modules in \cite{adp} are induced from one-dimensional modules where $g$ and $\zt$ act by arbitrary eigenvalues, thus more and less general than the previous definition.}.

 \begin{remark}\label{rem:verma-module-graded-detailed}
 The  Verma module $\verma{P}$ is suitable graded. Indeed we have $P = P^{(n)} \subset \verma{P}^{(n)} $
 by definition, hence $y^ix^jP  \subset \verma{P}^{(n - 2(i+j))} $ for any $i,j \in \N_0$ by \eqref{eq:weight-action}.
 That is, $\verma{P}^{(n - 2k)} = \oplus_{i+j =k}y^ix^jP $ and 
 \begin{align*}
 \verma{P} = \bigoplus_{k\in \N_{0}} \verma{P}^{(n - 2k)}.
 \end{align*}
 \end{remark}
 
 \begin{definition}  Let $\hwmod$ be the  full subcategory of $\grLmod{\D}$  consisting of 
those  $M$ such that $ \varPi(M) $ is bounded above.  For $M \in \hwmod$, 
its highest weight is $\hw M := \supp \varPi(M)$; we introduce
$\hwrank M = \dim M^{(\hw M)}$.
 \end{definition} 
 
  \begin{definition}\label{def:hwmodule}
 If $M \in \hwmod$  is generated by $M^{(\hw M)}$, then 
 we say that $M$ is a \emph{highest weight}  module.
  \end{definition} 
 
 \begin{remark}
\begin{enumerate}[leftmargin=*,label=\rm{(\roman*)}]
\item 
  If  $M \in \hwmod$ has highest weight $n$ and $P:= M^{(n)}$, then there is a morphism of $\D$-modules
  $\varPhi: \verma{P} \to M$ which is the identity on $P$; indeed, $u\cdot M^{(n)} = v \cdot M^{(n)} = 0$.
  Furthermore,  $\hw  (M / \imm \varPhi) < \hw M$.

\medbreak
\item  The Verma modules $\verma{P}$ with $P = P^{(n)} \in \lmod{\D^0}$ are highest weight modules. 
Any highest weight  module $M$ is the quotient of a Verma module, namely of  $\verma{P}$ with $P = M^{(\hw M)}$.

\medbreak
\item The category $\lmod{\D}$ is a subcategory of $\hwmod$.
 Thus any $M\in \lmod{\D}$ has a unique series of submodules 
 $0 =  M_0 \subsetneq M_1 \subsetneq M_2 \dots \subsetneq M_r = M$ 
 (that we call the \emph{hw-series})
 such that $M_i/M_{i-1}$ is a highest weigth module, 
 $i\in \I_{r}$,  and 
 $\hwrank M_i/M_{i-1} > \hwrank M_{i + 1}/M_{i}$,  $i\in \I_{r-1}$.

\medbreak
\item If $M, N \in \hwmod$, then $M \otimes N \in \hwmod$. This follows from Remark \ref{rem:tensor-product}; an
elementary argument shows that the set of pairs $(p,q) \in \varPi(M) \times \varPi(N)$ such that $p+ q = n$ for a given $n$,
is finite when both $\varPi(M)$ and $\varPi(N)$ are bounded above (or below).
\end{enumerate}
 \end{remark}
 
  \subsection{Lowest weight modules}
We also have the  full subcategory $\lwmod$ of $\grLmod{\D}$  consisting of 
 those  $M$ such that $ \varPi(M) $ is bounded below; for $M \in \lwmod$, we set $\lw M \coloneqq \inf \varPi(M)$ and
 $\lwrank M \coloneqq\dim M^{(\lw M)}$.
 If $M \in \hwmod$, then $M^{\dual} \in \lwmod$.  This gives a contravariant equivalence of categories between
 $\hwmod$ and $\lwmod$. 
 
 \medbreak
 Given $P\in \lmod{\D^0}$  such that $P = P^{(n)}$ for some $n\in \Z$,  
 $P$ becomes a $\D^{\leq0}$-module by $x\cdot m = 0$, $y\cdot m = 0$, $m \in P$.
 The  opposite Verma module is
 \begin{align*}
 \vermaopp{P} \coloneqq \Ind_{\D^{\leq0}}^{\D} (P) \simeq \D\ot_{\D^{\leq0}} P.
 \end{align*} 
 Clearly $\vermaopp{P}  \simeq \D^{> 0}\ot_{\ku} P$ as  vector spaces.
Then $M \in \lwmod$ is a \emph{lowest weight}  module if it  is generated by $M^{(\lw M)}$. 
We have the following properties.

\begin{enumerate}[leftmargin=*,label=\rm{(\roman*)}]
\item 
If  $M \in \lwmod$  and $P:= M^{(\lw M)}$, then there is a morphism of $\D$-modules
$\varphi: \vermaopp{P} \to M$, $\varphi_{\vert P} = \id_{P}$.
Moreover,  $\lw  (M / \imm \varphi) > \lw M$.

\smallbreak
\item  The opposite Verma modules  are lowest weight modules and
any lowest weight  module  is the quotient of one of them.

\smallbreak
\item The category $\lmod{\D}$ is a subcategory of $\lwmod$ and any object belonging to  $\lwmod$ and $\hwmod$ is in $\lmod{\D}$. 
Thus any $M\in \lmod{\D}$ has a unique series of submodules 
$0 =  M_0 \subsetneq M_1 \subsetneq M_2 \dots \subsetneq M_t = M$ 
(that we call the \emph{lw-series})
such that $M_i/M_{i-1}$ is a lowest weigth module, 
$i\in \I_{t}$,  and 
$\lwrank M_i/M_{i-1} < \lwrank M_{i + 1}/M_{i}$,  $i\in \I_{t-1}$.

\smallbreak
\item If $M, N \in \lwmod$, then $M \otimes N \in \lwmod$. 

\smallbreak
\item \label{item:dual-not-lw}
 Let $P\in \lmod{\D^0}$ such that $P = P^{(n)}$ for some $n\in \Z$. Then $P^*$ does not generate
 $\verma{P}^{\vee}$.

\smallbreak
\item \label{item:dual-not-verma}  
 Let $P\in \lmod{\D^0}$ such that $P = P^{(n)}$ for some $n\in \Z$. Then
 the  natural map of $\D$-modules $\varphi: \vermaopp{P} \to \verma{P^*}^{\vee}$  is not an isomorphism.
 \end{enumerate}

\noindent\emph{Proof of \ref{item:dual-not-lw}}. Let $p\in P$,  $f\in P^*$ and $a,b \in \ku$. Because of \eqref{eq:def-jordan3}, we have 
\begin{align*}
\langle (au + bv) \cdot f , x\cdot p \rangle &= -\langle  \cdot f , (au + bv)x\cdot p \rangle
\\= - \langle f, &\left(x(au+ b(u+v)) + b(1-g)\right)\cdot p \rangle
= - \langle f,  b(1-g)\cdot p \rangle;
\end{align*}
thus,  $\langle(au + bv) \cdot f  , x\cdot p\rangle = 0 $ if $p\in \ker (1-g)$. Since $\ker (1-g) \neq 0$, we conclude that
$(x\cdot P)^*$ is not contained in the submodule generated by $P^*$.
\qed

\smallbreak
\noindent\emph{Proof of \ref{item:dual-not-verma}}.
The natural  isomorphism  $P\simeq P^{**}$ is of  $\D_0$-modules because   $\Ss_{\D_0}^2 = \id$; thus
 $\varphi$ exists. Now $\imm \varphi$ is generated by $P^{**}$, thus \ref{item:dual-not-lw} applies.
\qed

\section{Modules of highest weight rank one} \label{sec:hwr1}
We aim  to describe indecomposable finite-dimensional highest weight  modules; we start in this section with those having
highest weight 1. We shall define  an indecomposable module $\usrl{n,m} \in \lmod{\D}$ for any $(n,m)\in\N_0^2$.

\medbreak
The only Verma modules of $\hwrank = 1$, with the conventions of this article, are
$\verma{n} \coloneqq  \verma{\mathbf{\lambda}_n}$, $n\in \Z$, 
where $\mathbf{\lambda}_n \in \lmod{\D^{\geq0}}$ has dimension $1$, with basis $\{z_n\}$ and action
\begin{align*}
g\cdot z_n &= z_n, & \zt \cdot z_n &= n z_n, & u\cdot z_n &= 0, & v\cdot z_n &= 0.
\end{align*}
The  elements  $\zn{n}{(i,j)} \coloneqq y^i x^j \cdot z_n$,  $i,j \in\N_0$, form a basis of $\verma{n}$.
Recall that $[t]^{[k]}$ denotes the raising factorial $[t]^{[k]} \coloneqq \prod_{i=1}^{k} (t+i-1)$ for $t\in\ku$ and $k\in\N_0$. 
By \cite[Lemma 2.5]{ap} the action of $\D$  is explicitly given by
\begin{align}\label{eq:action-verma-generators} \allowdisplaybreaks
&\begin{aligned}
x\cdot \zn{n}{(i,j)} &= \sum_{k=0}^i \tbinom{i}{k} \tfrac{k!}{2^k} \zn{n}{(i-k,j+k+1)}, 
\\ y\cdot \zn{n}{(i,j)} &= \zn{n}{(i+1,j)}, \\
g\cdot \zn{n}{(i,j)} &= \sum_{k=0}^i \tbinom{i}{k} \tfrac{[2]^{[k]}}{2^k} \zn{n}{(i-k,j+k)}, 
\\
\zt\cdot \zn{n}{(i,j)} &=  \big(n-2(i+j)\big)  \zn{n}{(i,j)},\\
\end{aligned}
\\ \notag
&\begin{aligned}
u\cdot \zn{n}{(i,j)} &= (-1) \sum_{k=1}^{i-1} \tbinom{i}{k+1} \tfrac{(k+1)!}{2^k} \zn{n}{(i-1-k,j+k)},\\
v\cdot \zn{n}{(i,j)} &= \tfrac{i (n-2j-i+1)}{2} \zn{n}{(i-1, j)} + 
\sum_{k=1}^{i-1}  \tfrac{i!(n-j-i+1)}{(n-k-1)!2^{k+1}} \zn{n}{(i-1-k,j+k)}.
\end{aligned}
\end{align}

It was shown in \cite{adp} that the Verma module
$\verma{n}$  has a unique simple quotient $ \verma{n}/\langle \zn{n}{(0,1)}, \zn{n}{(n+1,0)} \rangle$ which is isomorphic 
to $\smp{n}$, cf. Theorem \ref{th:Ln-simples}. 
Now $\verma{n}$ is presented by the generator  $z_n$ with defining relations
\begin{align}\label{eq:defn-rels-verma}
g\cdot z_n &= z_n,& \zt \cdot z_n &= n z_n, & u\cdot z_n &= 0,  & v\cdot z_n &= 0.
\end{align}
Similarly $\smp{n}$ is presented by  $z_n$ with defining relations \eqref{eq:defn-rels-verma} and
\begin{align}\label{eq:defn-rels-simple}
x \cdot z_n &= 0, &  y^{n+1} \cdot z_n  &= 0.
\end{align}

\begin{definition}
Let $(n,m)\in\N_0^2$. We define $\usrl{n,m} \coloneqq\verma{n+2m}/ \vermasub{n,m}$ where $\vermasub{n,m}$
is the submodule of $\verma{n+2m}$  generated by 
\begin{align}
\label{eq:defn-gens-uniserial--gral}
\begin{aligned}
\zn{n+2m}{(0,m+1)}& = x^{m+1} \cdot z_{n + 2m}, &&
\\
\zn{n+2m}{(n+2(m-j) +1, j)} &= y^{n+2(m-j) +1}x^j \cdot z_n, & j &  \in \I_{0,m}.
\end{aligned}
\end{align}

Thus $\usrl{n,m}$ is presented by  $z = z_{n + 2m}$ with defining  relations 
\begin{align}
\label{eq:defn-rels-vermabis}
&g\cdot z_{n + 2m} = z_{n + 2m},& &\zt \cdot z_{n + 2m} = (n + 2m) z_{n + 2m}, 
\\ \label{eq:defn-rels-uniserial--gral1}
 &u\cdot z_{n + 2m} = 0,  &  & v\cdot z_{n + 2m} = 0,
\\
\label{eq:defn-rels-uniserial--gral}
&x^{m+1} \cdot z_{n + 2m} = 0,&  &y^{n+2(m-j) +1}x^j \cdot z_{n + 2m} = 0, & j &  \in \I_{0,m}.
\end{align}
\end{definition}

Let $\tz_{n,m, (i,j)}$ be the image of $\zn{n+2m}{(i,j)}$ in $\usrl{n,m}$. 
For simplicity, we denote  $\zn{n+2m}{(i,j)}$ by $\znp{i}{j}$ and $\tz_{n,m, (i,j)}$ by $\tzn{i}{j}$.

\begin{lemma}\label{lemma:basis-T-n-m} The set $B = \{\tzn{i}{j} \colon j\in\I_{0,m}, i\in\I_{0,n+2(m-j)}\}$ is a basis of  $\usrl{n,m}$, 
thus
$$\dim \usrl{n,m} = (m+1)(n+m+1).$$
\end{lemma}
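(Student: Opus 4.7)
\emph{Plan.} Set $N = n+2m$. Introduce the index sets
\[
G = \{(i,j)\in \N_0^2 : 0\le j\le m,\ 0\le i\le n+2(m-j)\}, \qquad \bar G = \N_0^2\setminus G,
\]
and the subspace $W\subset \verma{N}$ spanned by $\{\znp{i}{j} : (i,j)\in \bar G\}$. A direct summation gives $|G| = \sum_{j=0}^{m}(n+2(m-j)+1) = (m+1)(n+m+1)$. Since $\{\znp{i}{j}\}_{(i,j)\in\N_0^2}$ is the PBW basis of $\verma{N}$, the whole statement reduces to proving $\vermasub{n,m} = W$: then $\usrl{n,m}=\verma{N}/W$ has basis $\{\tzn{i}{j} : (i,j)\in G\}$, of the required size.

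\emph{Step 1: $W$ is a $\D$-submodule of $\verma{N}$.} I would verify stability under each generator $x,y,g,\zt,u,v$ using the explicit formulas \eqref{eq:action-verma-generators} (stability under $g^{-1}$ is automatic since $g-1$ is locally nilpotent on weight subspaces). For $y$, $x$, $g$ and $u$, the output indices are of the shape $(i+1,j)$, $(i-k,j+k+1)$, $(i-k,j+k)$ and $(i-1-k,j+k)$ respectively; a short arithmetic check using the defining bounds $i\ge n+2(m-j)+1$ (when $j\le m$) or $j\ge m+1$ shows that each destination remains in $\bar G$, while $\zt$ acts by a scalar. The subtle case is $v$: its leading term $\tfrac{i(N-2j-i+1)}{2}\znp{i-1}{j}$ would, applied to the boundary element $(i,j)=(n+2(m-j)+1, j)$ with $j\le m$, output $\znp{n+2(m-j)}{j}\in G$. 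However, plugging $i=N-2j+1$ into the factor $N-2j-i+1$ gives $0$, killing this dangerous contribution, and for $i>N-2j+1$ the destination $(i-1,j)$ is still in $\bar G$. The remaining summands of $v$ land at positions $(i-1-k,j+k)$ with $k\ge 1$ and are handled as in the $u$ case.

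\emph{Step 2: $\vermasub{n,m}=W$, and conclusion.} One containment is immediate from Step 1 since the generators \eqref{eq:defn-gens-uniserial--gral} of $\vermasub{n,m}$ lie in $W$. For the converse, every $\znp{i}{j}$ with $(i,j)\in\bar G$ is produced from a generator by $\D^{<0}$: when $j\in\I_{0,m}$ and $i\ge n+2(m-j)+1$, use $\znp{i}{j}=y^{\,i-n-2(m-j)-1}\cdot \znp{n+2(m-j)+1}{j}$; when $j\ge m+1$, iterate the collapse $x\cdot\znp{0}{k}=\znp{0}{k+1}$ (the $i=0$ case of the $x$-formula) starting from the generator $\znp{0}{m+1}$ to obtain $\znp{0}{j}$, then act by $y^i$. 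Together these exhaust $\bar G$, proving $W\subseteq\vermasub{n,m}$. The genuinely delicate point throughout is the $v$-stability in Step 1: only the precise arithmetic cancellation at $i=N-2j+1$ prevents $v$ from leaking a boundary bad element into the good region, which would otherwise force $\vermasub{n,m}\supsetneq W$ and collapse the dimension; all the other checks are combinatorial bookkeeping.
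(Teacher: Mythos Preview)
Your proof is correct and follows essentially the same approach as the paper: both define the subspace spanned by the ``bad'' basis vectors (your $W$, the paper's $\mathtt{N}$), verify it is a $\D$-submodule by checking each generator---with the same key observation that the coefficient $N-2j-i+1$ vanishes at the boundary $i=N-2j+1$ so that $v$ does not leak into the good region---and then show the opposite inclusion by expressing each bad basis vector as a $\D^{<0}$-translate of a listed generator. Your explicit remark on $g^{-1}$-stability (via local nilpotency of $g-1$ on weight subspaces) is a small clarification the paper leaves implicit.
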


\begin{proof}
Let $\mathtt{N}$ be the vector subspace of $\verma{n+2m}$ generated by the  elements
\begin{align}\label{eq:base-N}
&\znp{i}{j}&&\text{with either  } &&\begin{cases} j\geq m+1 &\text{ and } i\in\N_0, \text{ or else} \\
0\leq j \leq m &\text{ and } i\geq n+2(m-j) +1.
\end{cases}
\end{align}
We claim that $\vermasub{n,m} = \mathtt{N}$; clearly this equality implies the Lemma. 
For this claim, since the generators  \eqref{eq:defn-gens-uniserial--gral} belong to $\mathtt{N}$,
we are reduced to prove
\begin{multicols}{2}
\begin{enumerate}[leftmargin=*,label=\rm{(\roman*)}]
\item \label{item:submod2}$\vermasub{n,m} \supset \mathtt{N}$.
\item\label{item:submod1} $\mathtt{N}$ is a submodule of  $\verma{n+2m}$, 

\end{enumerate}
\end{multicols}

\ref{item:submod2} is clear: if $j\geq m+1$, then $\znp{i}{j} = y^ix^{j-m-1} \cdot \znp{0}{m+1}$; if
$j \in \I_{0, m}$ and $i\geq n+2(m-j) +1$, then $\znp{i}{j} = y^{i-n-2(m-j) -1}  \cdot \znp{n+2(m-j) +1}{j}$.
 
 \medbreak
\ref{item:submod1}:
We use the formulas \eqref{eq:action-verma-generators} to show that the generators leave $\mathtt{N}$ invariant. 
Fix $\znp{i_0}{j_0}$ as in \eqref{eq:base-N}. If $j_0\geq m+1$, then by 
\eqref{eq:action-verma-generators} the actions of the generators involve linear combinations of $\znp{i}{j}$ with $j\geq j_0$ hence they are in $\mathtt{N}$. 
So we assume that $0\leq j_0 \leq m$ and $i_0\geq n+2(m-j_0) +1$.

\subsection*{Action of $x$ on $\znp{i_0}{j_0}$} Here $x \cdot \znp{i_0}{j_0}$ is a linear combination of elements of the form $\znp{i_0-k}{j_0+k+1}$ with $k\in \I_{0, i_0}$. If $j_0 + k + 1\geq m+1$, then  we are done. Otherwise,
\begin{align*}
i_0-k &\geq n+2m - 2j_0 + 1 - k \geq   n +2m - 2(j_0+k+1) + 1.
\end{align*} 

\subsection*{Action of $y$ or $\zt$ on $\znp{i_0}{j_0}$} This is clear.

\subsection*{Action of $g$ on $\znp{i_0}{j_0}$} Here $g \cdot \znp{i_0}{j_0}$ is a linear combination of elements of the form $\znp{i_0-k}{j_0+k}$ with $k\in \I_{0, i_0}$.  If $j_0 + k \geq m+1$, then we are done. Otherwise,
\begin{align*}
i_0-k &\geq n+2m - 2j_0 + 1 - k \geq  n +2m - 2(j_0+k) + 1.
\end{align*}

\subsection*{Action of $u$ on $\znp{i_0}{j_0}$} Here $u \cdot \znp{i_0}{j_0}$ is a linear combination of elements of the form 
$\znp {i_0-k -1} {j_0+k}$ with $k\in \I_{i_0-1}$.  If $j_0 + k \geq m+1$, then we are done. Otherwise,
\begin{align*}
i_0-k-1 &\geq n + 2m - 2 j_0 +1 -k -1 \geq  n + 2m - 2(j_0+k) + 1. 
\end{align*}

\subsection*{Action of $v$ on $\znp{i_0}{j_0}$} 
Here $v \cdot \znp{i_0}{j_0}$ is a linear combination of 
$\znp {i_0-1}{j_0})$ and elements of the form $\znp{i_0-k-1} {j_0+k}$ with $k\in \I_{i_0 - 1}$. 
 We begin with the latter case. If $j_0+k \geq m+1$, then we are done. Otherwise,
\begin{align*}
i_0-1-k &\geq n + 2m - 2 j_0 + 1 -1 -k \geq n + 2m - 2(j_0+k)+1. 
\end{align*}
Now for $z(i_0-1,j_0)$, if $i_0 = n+2(m-j_0) +1$ then the coefficient that goes with $z(i_0-1,j_0)$ is zero. So we can assume $i_0 > n+2(m-j_0) +1$. Then $i_0 - 1 \geq n+2(m-j_0) +1$ and we are done.
\end{proof}

Clearly, $\usrl{n,0} \simeq \smp{n}$.  

\begin{lemma}\label{lemma:Ln-is-sub-module-of-Tnm}
The linear map $\psi: \smp{n}\hookrightarrow \usrl{n,m}$ given by 
$\zn{n}{i}\mapsto \tzn{i}{m}$ for $i\in\I_{0,n}$, is a  monomorphism of $\D$-modules.
 Let $\mathtt{L} \coloneqq \imm \psi$.   If $m \geq 1$, then 
\[ \usrl{n,m} / \mathtt{L} \simeq \usrl{n+2,m-1}.\]
\end{lemma}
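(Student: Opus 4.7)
My plan is to handle the two claims separately. For the first --- that $\psi$ is an injective $\D$-module morphism --- the key observation is that when the action formulas \eqref{eq:action-verma-generators} in $\verma{n+2m}$ are applied to $\znp{i}{m}$, every resulting summand has second index of the form $m+k$ or $m+k+1$ with $k \geq 0$. In $\usrl{n,m}$ any $\znp{s}{j'}$ with $j' \geq m+1$ vanishes, since $\znp{s}{j'} = y^s x^{j'-m-1}\cdot\znp{0}{m+1}$ and $\znp{0}{m+1}=0$. Hence the formulas collapse to exactly the values prescribed by \eqref{eq:D-action-Ln}: one gets $x\cdot\tzn{i}{m} = u\cdot\tzn{i}{m} = 0$, $g\cdot\tzn{i}{m} = \tzn{i}{m}$, $\zt\cdot\tzn{i}{m} = \bigl((n+2m)-2(i+m)\bigr)\tzn{i}{m} = (n-2i)\tzn{i}{m}$, and $v\cdot\tzn{i}{m} = \tfrac{i(n-i+1)}{2}\tzn{i-1}{m}$; moreover $y\cdot\tzn{i}{m} = \tzn{i+1}{m}$, the case $i=n$ being absorbed by the relation $y^{n+1}x^m\cdot z_{n+2m} = 0$ (the $j=m$ instance of \eqref{eq:defn-rels-uniserial--gral}), so that $y\cdot\tzn{n}{m}=0$. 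Injectivity is then immediate because $\{\tzn{i}{m} : i \in \I_{0,n}\}$ is part of the basis of Lemma \ref{lemma:basis-T-n-m}.

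For the quotient, I would view both $\usrl{n,m}$ and $\usrl{n+2, m-1}$ as quotients of the common Verma module $\verma{n+2m}$ (using $(n+2)+2(m-1) = n+2m$) and show the inclusion $\vermasub{n,m} \subseteq \vermasub{n+2, m-1}$. Inspecting the generating sets in \eqref{eq:defn-gens-uniserial--gral}: the generator $x^{m+1}\cdot z_{n+2m}$ of $\vermasub{n,m}$ equals $x\cdot(x^m\cdot z_{n+2m})$ and so lies in $\vermasub{n+2, m-1}$; the $y$-type generators with $j \in \I_{0, m-1}$ appear verbatim in $\vermasub{n+2,m-1}$; and the remaining $j=m$ generator $y^{n+1}x^m\cdot z_{n+2m} = y^{n+1}\cdot(x^m\cdot z_{n+2m})$ also lies in $\vermasub{n+2,m-1}$. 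This yields a surjection $\usrl{n,m} \twoheadrightarrow \usrl{n+2, m-1}$ whose kernel $\mathtt{K} \coloneqq \vermasub{n+2,m-1}/\vermasub{n,m}$ is, as a submodule of $\usrl{n,m}$, generated by the image of the single additional generator $x^m\cdot z_{n+2m}$, namely $\tzn{0}{m}$.

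It remains to identify $\mathtt{K}$ with $\mathtt{L}$. Since $\smp{n}$ is generated as a $\D$-module by its highest weight vector $\zn{n}{0}$ (the other basis vectors arise as iterated $y$-actions by \eqref{eq:D-action-Ln}), we obtain $\mathtt{L} = \psi(\smp{n}) = \D\cdot\psi(\zn{n}{0}) = \D\cdot\tzn{0}{m} = \mathtt{K}$, and the surjection descends to the desired isomorphism $\usrl{n,m}/\mathtt{L}\simeq \usrl{n+2, m-1}$. The only step requiring real care is the containment $\vermasub{n,m}\subseteq \vermasub{n+2, m-1}$; once this is in place, the rest is formal bookkeeping, thanks to the collapsing effect of the relation $\znp{0}{m+1}=0$ on the action formulas.
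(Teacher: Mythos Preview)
Your argument is correct. For the first claim (that $\psi$ is a monomorphism of $\D$-modules) your approach coincides with the paper's: both verify that the action formulas \eqref{eq:action-verma-generators} collapse on $\tzn{i}{m}$ because every term with second index $\geq m+1$ vanishes, and injectivity follows from Lemma \ref{lemma:basis-T-n-m}.

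For the quotient, however, you take a different route from the paper. The paper checks that the class of $\tzn{0}{0}$ in $\usrl{n,m}/\mathtt{L}$ satisfies the defining relations \eqref{eq:defn-rels-vermabis}--\eqref{eq:defn-rels-uniserial--gral} of $\usrl{n+2,m-1}$, obtaining an epimorphism $\usrl{n+2,m-1}\twoheadrightarrow\usrl{n,m}/\mathtt{L}$, and then concludes by the dimension count $\dim\usrl{n+2,m-1}=m(n+m+2)=\dim\usrl{n,m}/\mathtt{L}$. You instead observe that both modules are quotients of the \emph{same} Verma module $\verma{n+2m}$ (since $(n+2)+2(m-1)=n+2m$), verify the inclusion $\vermasub{n,m}\subseteq\vermasub{n+2,m-1}$ directly on generators, and identify the resulting kernel $\vermasub{n+2,m-1}/\vermasub{n,m}$ with $\mathtt{L}$ by noting it is generated by the single new generator $\tzn{0}{m}=\psi(\zn{n}{0})$. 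Your approach is a bit more structural and dispenses with the dimension formula; the paper's is marginally quicker once the dimension is in hand. Both are short and valid.
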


\pf
Using the formulas \eqref{eq:action-verma-generators} we see first that $\tzn{0}{m}$ satisfies the defining relations
\eqref{eq:defn-rels-verma} and \eqref{eq:defn-rels-simple} of $\smp{n}$, implying the existence of $\psi$
with the desired properties. Second, we see that the class of $\tzn{0}{0}$ in $\usrl{n,m} / \mathtt{L}$ 
satisfies the defining relations
\eqref{eq:defn-rels-vermabis}, \eqref{eq:defn-rels-uniserial--gral1} and \eqref{eq:defn-rels-uniserial--gral} of $\usrl{n+2,m-1}$.
Thus we have an epimorphism $\usrl{n+2,m-1} \twoheadrightarrow\usrl{n,m} / \mathtt{L}$ which is an isomorphism
because $\dim\usrl{n+2,m-1} = m(n+m+2) = \dim\usrl{n,m} / \mathtt{L}$.
\epf

\begin{lemma}\label{lemma:Ln-is-contained-in-every-sub-module-of-Tnm}
Let $N$ be a non-zero submodule of $\usrl{n,m}$. Then $\mathtt{L} \subseteq N$.
\end{lemma}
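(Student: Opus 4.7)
The plan is to prove the statement by induction on $m$, using the short exact sequence $0 \to \mathtt{L} \to \usrl{n,m} \to \usrl{n+2,m-1} \to 0$ provided by Lemma \ref{lemma:Ln-is-sub-module-of-Tnm}. The base case $m = 0$ is immediate: $\usrl{n,0} \simeq \smp{n}$ is simple and coincides with $\mathtt{L}$, so any nonzero $N$ equals $\mathtt{L}$.

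For the inductive step with $m \geq 1$, let $\pi\colon \usrl{n,m} \twoheadrightarrow \usrl{n,m}/\mathtt{L} \simeq \usrl{n+2,m-1}$ be the canonical projection. If $\pi(N) = 0$, then $N \subseteq \mathtt{L}$ and the simplicity of $\mathtt{L} \simeq \smp{n}$ forces $N = \mathtt{L}$. Otherwise, the inductive hypothesis applied to $\usrl{n+2,m-1}$ shows that $\pi(N)$ contains the analogous copy $\mathtt{L}' \simeq \smp{n+2}$. Since $1 \leq n+2$, the vector $\tzn{1}{m-1}$ is a basis element of $\mathtt{L}'$, and I would lift it to some $w = \tzn{1}{m-1} + \ell \in N$ with $\ell \in \mathtt{L}$.

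The concluding step is to act on $w$ by $g-1$. Since $g$ acts as identity on $\mathtt{L} \simeq \smp{n}$ by \eqref{eq:D-action-Ln}, we have $(g-1)\cdot \ell = 0$. On the other hand, specialising the formula \eqref{eq:action-verma-generators} for the action of $g$ at $i = 1$ gives $(g-1)\cdot \tzn{1}{m-1} = \tzn{0}{m}$, which is a nonzero element of $\mathtt{L}$ by Lemma \ref{lemma:basis-T-n-m}. Hence $\tzn{0}{m} \in N \cap \mathtt{L}$, and the simplicity of $\mathtt{L}$ yields $\mathtt{L} \subseteq N$.

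The central insight, and the only non-formal point in the argument, is that the non-semisimple Jordan-type action of $g$ on $\usrl{n,m}$ furnishes a way to descend from level $j = m-1$ into the socle $\mathtt{L}$ (level $j = m$) by means of $g-1$; this is exactly what eliminates the ambiguity introduced by the lift modulo $\mathtt{L}$. I do not anticipate a serious obstacle beyond the short action computation, which is a direct substitution in \eqref{eq:action-verma-generators}.
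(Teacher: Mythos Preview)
Your argument is correct and takes a genuinely different route from the paper's. The paper proceeds by a direct computation: it establishes the closed formula
\[
v^{i}\cdot\tzn{i}{j} = \frac{(i!)^2}{2^i} \binom{n+2(m-j)}{i}\tzn{0}{j}
\]
recursively, then for an arbitrary $0 \neq z = \sum c_{i,j}\tzn{i}{j} \in N$ applies a suitable power of $v$ followed by a power of $x$ to land on a nonzero multiple of $\tzn{0}{m}$.

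Your approach instead exploits the filtration from Lemma~\ref{lemma:Ln-is-sub-module-of-Tnm} and the single identity $(g-1)\cdot\tzn{1}{m-1}=\tzn{0}{m}$, which is immediate from \eqref{eq:action-verma-generators}. This is shorter and more structural: it isolates the key mechanism (the non-semisimple action of $g$ pushes level $j=m-1$ into the socle at level $j=m$, while $g-1$ annihilates $\mathtt{L}$) and avoids the inductive verification of the $v$-formula. The paper's approach, by contrast, is self-contained in the sense that it does not invoke the quotient $\usrl{n,m}/\mathtt{L}\simeq\usrl{n+2,m-1}$ and yields the explicit formula for $v^i$ as a byproduct, which could be of independent use. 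Both arguments ultimately produce $\tzn{0}{m}\in N$; yours just gets there faster.
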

\begin{proof}
We first show that for every $j\in\I_{m}$ and $i\in\I_{0,n+2(m-j)}$ we have
\begin{align}\label{eq:action-vi}
v^{i}\cdot\tzn{i}{j} = \frac{(i!)^2}{2^i} \binom{n+2(m-j)}{i}\tzn{0}{j}.
\end{align}
We argue recursively on $i$. For $i=0$ the equality is clear. Suppose that
\begin{align*}
v^{\ell}\cdot\tzn{\ell}{j} = \frac{(\ell!)^2}{2^\ell} \binom{n+2(m-j)}{\ell}\tzn{0}{j}, \qquad \text{ for } \ell\in\I_{0,i}.
\end{align*}
Using \eqref{eq:action-verma-generators} we see that  $v^{i+1}\cdot\tzn{i+1}{j}$ is equal to
\begin{align*}
&\tfrac{(i+1)(n+2(m-j)-i)}{2}  v^i \tzn{i}{j} +\sum_{k=1}^i \tbinom{i+1}{k+1} \tfrac{(n+2m-j-i)(k+1)!}{2^{k+1}}
 v^{k} v^{i-k}\cdot \tzn{i-k}{j+k}\\
&=\tfrac{(i+1)(n+2(m-j)-i)(i!)^2}{2^{i+1}} \tbinom{n+2(m-j)}{i}\tzn{0}{j}\\
&+\sum_{k=1}^i \tbinom{i+1}{k+1} \tfrac{(n+2m-j-i)(k+1)! ((i-k)!)^2}{2^{i+1}}\tbinom{n+2(m-j-k)}{i-k} v^k\tzn{0}{j+k}\\
&=\tfrac{((i+1)!)^2}{2^{i+1}} \tbinom{n+2(m-j)}{i+1}\tzn{0}{j}.
\end{align*}
Clearly \eqref{eq:action-vi} implies that $v^k \cdot \tzn{i}{j} = 0$ for $k > i$.
Now let $z\in N - 0$. Then $z = \sum_{i,j} c_{i,j} \tzn{i}{j}$ for some  $c_{i,j}\in \ku$. Let $i_0 = \max\{i \colon c_{i,j}\neq 0\}$. Then
\begin{align*}
v^{i_0}\cdot z = \sum_j c_{i_0,j}\frac{(i_0!)^2}{2^{i_0}} \binom{n+2(m-j)}{i_0}\tzn{0}{j}\in N.
\end{align*}
Taking $j_0 = \max\{j\colon c_{i_0,j}\neq 0\}$ we get that
\begin{align*}
x^{m-j_0} v^{i_0}\cdot z &= c_{i_0,j_0}\frac{(i_0!)^2}{2^{i_0}} \binom{n+2(m-j_0)}{i_0}\tzn{0}{m}\in N.
\end{align*}
Hence $\tzn{0}{m} \in N$ and since $\mathtt{L}$ is simple, this shows that $\mathtt{L}\subseteq N$.
\end{proof}
\begin{prop}
The module $\usrl{n,m}$ is uniserial and indecomposable.
\end{prop}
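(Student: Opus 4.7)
The plan is to prove uniseriality by induction on $m$, and then deduce indecomposability as an immediate corollary (a uniserial module cannot be written as a direct sum of two non-zero submodules since one summand would need to contain the other).

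For the base case $m=0$, we have $\usrl{n,0} \simeq \smp{n}$ by the observation just before Lemma \ref{lemma:Ln-is-sub-module-of-Tnm}; this is simple by Theorem \ref{th:Ln-simples}, hence trivially uniserial.

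For the inductive step, assume $\usrl{n+2,m-1}$ is uniserial. I would combine the two preceding lemmas as follows. Take any non-zero submodule $N \subseteq \usrl{n,m}$. Lemma \ref{lemma:Ln-is-contained-in-every-sub-module-of-Tnm} gives $\mathtt{L} \subseteq N$, so the assignment $N \mapsto N/\mathtt{L}$ sets up a bijection, preserving inclusions, between non-zero submodules of $\usrl{n,m}$ and submodules of $\usrl{n,m}/\mathtt{L} \simeq \usrl{n+2,m-1}$ (via Lemma \ref{lemma:Ln-is-sub-module-of-Tnm}). By the inductive hypothesis, the submodules of $\usrl{n+2,m-1}$ form a chain under inclusion, so the non-zero submodules of $\usrl{n,m}$ form a chain; together with the zero submodule, the full lattice of submodules is totally ordered. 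This is precisely uniseriality.

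I do not expect any real obstacle: the two key technical inputs (the embedding $\mathtt{L} \hookrightarrow \usrl{n,m}$ with quotient $\usrl{n+2,m-1}$, and the fact that $\mathtt{L}$ sits inside every non-zero submodule) have already been established, so the argument reduces to an induction plus the standard observation that uniserial implies indecomposable. The only point worth making explicit is that the bijection $N \leftrightarrow N/\mathtt{L}$ between non-zero submodules is inclusion-preserving in both directions, which follows from the correspondence theorem applied to the quotient by $\mathtt{L}$.
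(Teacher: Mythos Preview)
Your proof is correct and follows essentially the same approach as the paper: both arguments combine Lemma~\ref{lemma:Ln-is-contained-in-every-sub-module-of-Tnm} (every non-zero submodule contains $\mathtt{L}$) with Lemma~\ref{lemma:Ln-is-sub-module-of-Tnm} (the quotient by $\mathtt{L}$ is $\usrl{n+2,m-1}$) in an inductive scheme. The paper phrases the conclusion as ``the socle series is a composition series,'' while you spell out the induction via the correspondence theorem on the submodule lattice; these are two ways of saying the same thing.
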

\begin{proof}
If $N$ is a simple submodule of $\usrl{n,m}$, then $N = \mathtt{L}$ by Lemma \ref{lemma:Ln-is-contained-in-every-sub-module-of-Tnm}, thus   $\mathtt{L}$ is the socle of $\usrl{n,m}$. 
We conclude from Lemma \ref{lemma:Ln-is-sub-module-of-Tnm}  that the socle series is a composition series, hence 
$\usrl{n,m}$ is uniserial and  indecomposable. 
\end{proof}

\begin{remark}
The dual module $\usrl{n,m}^*$ is also uniserial and indecomposable but it is not a highest weight module;
the subfactors of its hw-series are the simple modules $\smp{n}$ etc.
\end{remark}

\begin{theorem}\label{thm:indecomposables-rank-1-class}
Let $\mathtt{T}$ be an indecomposable finite-dimensional highest weight module with $\hwrank \mathtt{T} = 1$. Then $\mathtt{T} \simeq \usrl{n,m}$
for some $(n,m)\in \N_{0}$.
\end{theorem}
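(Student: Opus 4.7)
\emph{Proof plan.} Set $N:=\hw\mathtt{T}$ and pick a nonzero $\tilde z\in\mathtt{T}^{(N)}$. Since $\mathtt{T}^{(N)}$ is one-dimensional and $g-1$ acts nilpotently by Proposition~\ref{prop:g-1-x-u-D-nilpotent}, one has $g\tilde z=\tilde z$ and $\zt\tilde z=N\tilde z$; maximality of $N$ gives $u\tilde z=v\tilde z=0$. Hence $\tilde z$ satisfies \eqref{eq:defn-rels-verma} and $\mathtt{T}$ is a quotient of $\verma{N}$. The plan is to produce $n,m\in\N_0$ with $n+2m=N$ such that the surjection $\verma{N}\twoheadrightarrow\mathtt{T}$ descends to $\usrl{n,m}\twoheadrightarrow\mathtt{T}$, and then to conclude via a dimension count.

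\medbreak
Invoking Proposition~\ref{prop:g-1-x-u-D-nilpotent} once more, $x$ acts nilpotently on $\mathtt{T}$; let $m$ be minimal with $x^{m+1}\tilde z=0$, so that $x^k\tilde z\neq 0$ for $k\in\I_{0,m}$. Set $\mathtt{T}_k:=\D\cdot x^k\tilde z$; then $\mathtt{T}=\mathtt{T}_0\supseteq\mathtt{T}_1\supseteq\cdots\supseteq\mathtt{T}_m\supseteq\mathtt{T}_{m+1}=0$. Relations \eqref{eq:def-jordan2}--\eqref{eq:def-jordan3} give $ux=xu$ and $vx=xv+(1-g)+xu$; combined with $gx=xg$ and $g\tilde z=\tilde z$, an easy induction on $k$ shows $u\cdot x^k\tilde z=v\cdot x^k\tilde z=0$. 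Hence $\mathtt{T}_k=\D^{<0}\cdot x^k\tilde z$ is a highest weight submodule of weight $N-2k$ with $\mathtt{T}_k^{(N-2k)}=\ku x^k\tilde z$, and in particular $\mathtt{T}_k\supsetneq\mathtt{T}_{k+1}$ by comparing top weights.

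\medbreak
Next, $\mathtt{T}_k/\mathtt{T}_{k+1}$ is cyclic on the class of $x^k\tilde z$, which is annihilated by $x,u,v$ and fixed by $g$; thus the two-sided ideal $\D\Nuc^+$ kills this generator, hence the whole module, and $\mathtt{T}_k/\mathtt{T}_{k+1}$ descends via the map $\pi$ of \eqref{eq:iso-hopf} to a nonzero finite-dimensional highest weight $\spl_2(\ku)$-module of weight $N-2k$. It follows that $N-2k\in\N_0$ and $\mathtt{T}_k/\mathtt{T}_{k+1}\simeq\smp{N-2k}$. Setting $n:=N-2m\in\N_0$, a telescoping sum gives
\begin{align*}
\dim\mathtt{T}=\sum_{k=0}^{m}(N-2k+1)=(m+1)(n+m+1)=\dim\usrl{n,m}.
\end{align*}

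\medbreak
It remains to verify the relations \eqref{eq:defn-rels-uniserial--gral} of $\usrl{n,m}$ for $\tilde z$ in $\mathtt{T}$. The relation $x^{m+1}\tilde z=0$ is just the minimality of $m$. For $j\in\I_{0,m}$, the image of $y^{N-2j+1}x^j\tilde z$ in $\mathtt{T}_j/\mathtt{T}_{j+1}\simeq\smp{N-2j}$ vanishes, so the element lies in $\mathtt{T}_{j+1}$. Because every composition factor of $\mathtt{T}_{j+1}$ is some $\smp{p}$ (Theorem~\ref{th:Ln-simples}) with $p\leq\hw\mathtt{T}_{j+1}=N-2j-2$, all weights of $\mathtt{T}_{j+1}$ lie in $[-(N-2j-2),N-2j-2]$; but $y^{N-2j+1}x^j\tilde z$ has weight $-N+2j-2<-(N-2j-2)$, forcing it to vanish. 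Consequently, $\verma{N}\twoheadrightarrow\mathtt{T}$ factors through $\usrl{n,m}$, and the dimension equality above promotes the resulting surjection to an isomorphism. The decisive step is this final weight comparison: upgrading ``vanishing modulo $\mathtt{T}_{j+1}$'' to genuine vanishing uses the weight bound on $\mathtt{T}_{j+1}$, which in turn rests on Theorem~\ref{th:Ln-simples}. As an aside, the indecomposability hypothesis is never actually used; the argument shows that every finite-dimensional highest weight $\D$-module of $\hwrank$ one is already isomorphic to some $\usrl{n,m}$.
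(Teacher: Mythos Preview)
Your proof is correct and takes a genuinely different route from the paper's. The paper fixes $a_j$ with $y^{a_j}x^j\cdot\mathtt z\neq0$, $y^{a_j+1}x^j\cdot\mathtt z=0$, applies the explicit formula \eqref{eq:action-verma-generators} for the action of $v$ to $y^{a_j+1}x^j\cdot\mathtt z$, and reads off $a_j=p-2j$ from a linear-independence argument; this shows directly that the defining relations of $\usrl{n,m}$ hold, and a basis argument finishes. Your approach is more structural: you build the filtration $\mathtt T_k=\D\cdot x^k\tilde z$, use the Hopf ideal $\D\Nuc^+$ to identify each subquotient $\mathtt T_k/\mathtt T_{k+1}$ with $\smp{N-2k}$ via $\spl_2$-theory, obtain $\dim\mathtt T$ by telescoping, and then verify the $y$-relations by a clean weight bound on $\mathtt T_{j+1}$. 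This avoids the explicit Verma formulas entirely and makes transparent why the dimensions match; the paper's computation, on the other hand, pins down the exact vanishing order of $y$ without invoking the exact sequence. Your closing observation that indecomposability is never used is also correct (and applies equally to the paper's argument).
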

\pf
 Assume that $\hw \mathtt{T} = p \in \Z$. Since   $\mathtt{T}$ 
 is generated by $\mathtt{T}^{(p)}$,  $\mathtt{T}$ is a quotient of $\verma{p}$. 
Fix $\mathtt{z} \in \mathtt{T}^{(p)} - 0$. Since $\hwrank \mathtt{T} = 1$, $\mathtt{z}$ 
generates $\mathtt{T}$, $\zt \cdot \mathtt{z} = p \mathtt{z}$ and $ g \cdot \mathtt{z} =  \mathtt{z}$. 
Given a simple quotient $L$ of $\mathtt{T}$, $L$ is then a (finite-dimensional) simple quotient of $\verma{p}$. Then $p\in \N_{0}$ and $L \simeq L(p)$  as in \cite[3.11]{adp}. 

\smallbreak
 By  Proposition \ref{prop:g-1-x-u-D-nilpotent},  $x^m \cdot \mathtt{z} \neq 0$ and $x^{m+1} \cdot \mathtt{z} = 0$
 for some $m\in \N_0$. 
Let $n \coloneqq p - 2m$. Applying the relations \eqref{eq:action-verma-generators}, we see that 
the submodule generated by $x^m \cdot \mathtt{z}$ is a quotient of $M(n)$; hence $n \in \N_0$ arguing as above.

\smallbreak
Let $j\in \I_{0, m}$. By Remark \ref{rem:locally-nilpotent} there exists $a_j\in \N_0$ such that
$y^{a_j}x^j \cdot \mathtt{z} \neq 0$, $y^{a_j + 1}x^j \cdot \mathtt{z} = 0$. 
Then  the last commutation relation in \eqref{eq:action-verma-generators} says that
\begin{align}\label{eq:proof-T-rank-1}
\begin{aligned}
0 = v y^{a_j + 1}x^j \cdot \mathtt{z} &= \tfrac{(a_j + 1) (p-2j-a_j)}{2} y^{a_j}x^j \cdot \mathtt{z} 
\\ & \qquad + 
\sum_{k=1}^{a_j}  \tfrac{(a_j + 1)!(p-j-a_j)}{(p-k-1)!2^{k+1}} y^{a_j-k}x^{j+k} \cdot \mathtt{z}.
\end{aligned}\end{align}
Let $I = \{k\in \I_{0,a_j} :  y^{a_j-k}x^{j+k} \cdot \mathtt{z} \neq 0\}$. Then $\{ y^{a_j-k}x^{j+k} \cdot \mathtt{z} : k\in I\}$ is linearly independent. Indeed if $\sum_{k\in I} c_k  y^{a_j-k}x^{j+k} \cdot \mathtt{z} = 0$, then applying $y$ enough times one can show that each $c_k$ should be zero. Then equation \eqref{eq:proof-T-rank-1} tell us that $p-2j-a_j = 0$ since $0\in I$. Then $a_j = p - 2j = n - 2(m-j)$ by definition of $n$. Hence $\mathtt{T}$ is a quotient of $\mathtt{T}(n,m)$. Let $\mathtt{z}_{i,j} \coloneqq y^i x^ j \cdot \mathtt{z}$. By an argument similar to the one given in Lemma \ref{lemma:basis-T-n-m}, one showa that the elements $\{\mathtt{z}_{i,j}\}$ form a basis of $\mathtt{T}$, hence $\mathtt{T}\simeq \mathtt{T}(n,m)$.
\epf

\begin{remark}
The previous result shows that the requirement in Definition \ref{def:hwmodule} that  highest weight modules 
are generated by their highest weight subspaces is necessary. 
Indeed, otherwise there would be many more 
indecomposables than in Proposition \ref{thm:indecomposables-rank-1-class}. For instance
let $\widetilde{\mathtt{T}} = \mathtt{T}(n,m) \oplus S_\gamma(n)$, 
where $m\geq 1$ and $S_\gamma(n)$ is the indecomposable module defined in 
Proposition \ref{prop:Sn},  let $N$ be the submodule generated by the 
elements $s_i - \mathtt{z}_{i,m}$, and let $\mathtt{T} = \widetilde{\mathtt{T}}/N$. 
Then $\mathtt{T}$ is easily seen to be indecomposable since it satisfies a 
property similar to the one proven in Lemma \ref{lemma:Ln-is-contained-in-every-sub-module-of-Tnm}. 
It satisfy $\hwrank \mathtt{T} = 1$, but it is not isomorphic to any $\mathtt{T}(n',m')$ 
since it has two copies of $\mathtt{L}(n)$ as composition factors.
\end{remark}

\section{Modules of highest weight rank two} \label{sec:hwr2}

In this Section we introduce a family $\usrldos{n}{\gamma}$ of highest weight modules of  $\hwrank 2$ and use it to classify self extensions of simple modules. 

\subsection{Highest weight rank 2}
Let $n\in \N_0$ and $(\lambda, \mu)\in \ku^2$.
We consider  the $\D^0$-module $\indescdos{n}{\lambda}{\mu}$ with basis $\stt, \wtt$ and action
\begin{align}\label{eq:vermados-1}
g\cdot \stt &= \stt,& g\cdot \wtt &= \wtt + \lambda \stt,
& \zt \cdot \stt &=n \stt,  &  \zt \cdot \wtt &=  n\wtt + \mu \stt.
\end{align}
The  module $\indescdos{n}{\lambda}{\mu}$ is isomorphic to $\indescdos{n}{t\lambda}{t\mu}$ for any $t\in \kut$ and is indecomposable whenever $(\lambda, \mu)  \neq 0$; any indecomposable in $\lmod{\D^0}$ of dimension 2 has this shape.
Let $\vermados{n}{\lambda}{\mu} = \Ind_{\D^{\geq0}}^{\D} (\indescdos{n}{\lambda}{\mu})$; it  is presented by generators $\stt$ and $\stt$ with defining relations \eqref{eq:vermados-1} and
 \begin{align}\label{eq:vermados-2}
u\cdot \stt  &= 0,  & v\cdot \stt &= 0, & u\cdot \wtt  &= 0,  & v\cdot \wtt &= 0.
\end{align}
Here is a basis of $\vermados{n}{\lambda}{\mu}$:
\begin{align*}
\stt_{i,j} &\coloneqq y^i x^j \cdot \stt,& \wtt_{i,j} &\coloneqq y^i x^j \cdot \wtt,&  i,j &\in\N_0.
\end{align*}
Let $\Mtt_1$ be the span of $(\stt_{i,j})_{i,j \in\N_0}$ and $\Mtt_2 = \vermados{n}{\lambda}{\mu} / \Mtt_1$.
Thus we have a short exact sequence $\xymatrix{ \verma{n}\simeq \Mtt_1 \ar@{^{(}->}[r]   & \vermados{n}{\lambda}{\mu}  \ar@{->>}[r] & \Mtt_2 \simeq\verma{n} }$
in $\Lmod{\D}$. 

\medbreak
In the rest of this Subsection,
$N$ is a submodule of $\vermados{n}{\lambda}{\mu}$, 
\begin{align*}
S &\coloneqq \vermados{n}{\lambda}{\mu}/ N, & S_1 &\coloneqq \Mtt_1 / \Mtt_1 \cap N \hookrightarrow S,&
S_2 &\coloneqq S/S_1 \simeq \Mtt_2/ \left(N /\Mtt_1 \cap N\right).
\end{align*}
Let $s,w,s_{ij}, w_{ij}$ be the images of  $\stt,\wtt,\stt_{ij}, \wtt_{ij}$ in $S$;  let $\overline{r} \in S_2$ 
be the image of $r \in S$ under the canonical projection.

\subsubsection{$S_1$ and $S_2$ are simple}
\begin{lemma}\label{lema:-selfdual} Keep the notation above.

\begin{enumerate}[leftmargin=*,label=\rm{(\roman*)}]
\item\label{item:lema-selfdual-1}
Assume that the following relations hold in $S$ and $S_2$:
\begin{align}
\label{eq:defn-rels-uniserial-dos}
x \cdot s &= 0,
\\\label{eq:defn-rels-uniserial-tres}
x \cdot \overline{w} &= 0.
\end{align} 
Then $S_1$ is spanned by $s_i \coloneqq s_{i,0} = y^{i}\cdot s$, $i \in \N_0$;
and $S_2$ is spanned by $\overline{w_i}$ where   $w_i \coloneqq w _{i,0} = y^{i}\cdot w$, $i \in \N_0$.
Hence
\begin{align}\label{eq:defn-rels-uniserial-directsum}
S^{(n - 2i)} &= \ku  s_i  + \ku  w_i,  \quad i \in \N_{0}, &&\text{and} & S &= \oplus_{i  \in \N_{0}} S^{(n - 2i)} .
\end{align}
In addition, there exists $\gamma \in \ku$ such that $x \cdot w  = \gamma s_1$.

\item\label{item:lema-selfdual-2}  The following relations hold for all $i \in \N_0$:
\begin{align}
\label{eq:defn-rels-uniserial-x-si}
x\cdot s_i &= 0, \quad u \cdot s_{i} = 0, \quad v \cdot s_{i} = \tfrac{i (n-i+1)}{2} s_{i-1},
\\
\label{eq:defn-rels-uniserial-x-wi}
x \cdot w_i  &= \gamma s_{i+1}, 
\\ \label{eq:defn-rels-uniserial-z-wi}
\zt \cdot w_i &= (n-2i)  w_i + \mu  s_i,
\\ 
\label{eq:defn-rels-uniserial-g-wi}
g \cdot w_i  &= w_i + (\lambda + i\gamma)s_i, 
\\ \label{eq:defn-rels-uniserial-u-wi}
u \cdot w_i &=  - \left(i\lambda + \tfrac{i(i-1)}{2}\gamma \right) \,  s_{i-1},
\\\label{eq:defn-rels-uniserial-v-wi}
v\cdot w_i &= \tfrac{i(n-i+1)}{2} w_{i-1}  
+\left(\tfrac{i(n-2i+2)}{2} \lambda + \tfrac{i(i-1)(n -2i+2)}{4} \gamma - i \mu \right) s_{i-1}.
\end{align}
\end{enumerate}
\end{lemma}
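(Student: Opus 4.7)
\textbf{Part (i)} is a structural extraction from the two hypotheses. Since $\vermados{n}{\lambda}{\mu} \simeq \D^{<0}\otimes \indescdos{n}{\lambda}{\mu}$, the submodule $\Mtt_1$ and the quotient $\Mtt_2$ are each isomorphic to $\verma{n}$, with PBW bases $\{y^i x^j \cdot \stt\}$ and $\{\overline{y^i x^j\cdot \wtt}\}$, respectively. The hypothesis $x\cdot s = 0$ transports to $S_1\hookrightarrow S$ and gives $x^j\cdot s = 0$ for $j\ge 1$, so $y^i x^j \cdot s = 0$ for all $j\geq 1$, whence $S_1$ is spanned by the $s_i$. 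The analogous argument in $S_2$, using $x\cdot \overline{w}=0$, shows that $S_2$ is spanned by the $\overline{w_i}$. Both $\stt$ and $\wtt$ lie in the generalized weight space $\vermados{n}{\lambda}{\mu}^{(n)}$ because \eqref{eq:vermados-1} makes $(\zt-n)$ and $(g-1)$ square-zero on $\ku\stt+\ku\wtt$; hence \eqref{eq:weight-action} places $s_i,w_i\in S^{(n-2i)}$. Since $S\in\hwmod$ has weights contained in $\{n-2i : i\in\N_0\}$, the direct sum \eqref{eq:defn-rels-uniserial-directsum} follows. Finally, $x\cdot w\in S^{(n-2)}$ projects to $x\cdot \overline{w}=0$ in $S_2$, so $x\cdot w\in S_1\cap S^{(n-2)} = \ku\, s_1$, which gives the scalar $\gamma$.

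\textbf{Part (ii)} is a direct induction on $i$ using the defining relations \eqref{eq:def-jordan1}--\eqref{eq:def-jordan3} of $\D$. The identities for the $s_i$ are the standard $\spl_2$-type computation: from $x\cdot s=u\cdot s=v\cdot s=0$, $g\cdot s=s$ and $\zt\cdot s=ns$, the relations $yx=xy-\tfrac12 x^2$, $uy=yu+(1-g)$ and $vy=yv+\tfrac12 g\zt + yu$ successively yield \eqref{eq:defn-rels-uniserial-x-si}; the coefficient $\tfrac{i(n-i+1)}{2}$ appears as the sum of the inductive $v\cdot s_{i-1}$ contribution and a $\tfrac12 g\zt\cdot s_{i-1}$ contribution. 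For the $w_i$ the same strategy works, but now the extra terms $\lambda s$, $\mu s$, $\gamma s_1$ coming from $g\cdot w$, $\zt\cdot w$ and $x\cdot w$ propagate through the commutations: for instance $g\cdot w_i = gy^i\cdot w$ is computed by expanding $gy=yg+xg$ and using $x\cdot w_k = \gamma s_{k+1}$ together with $g\cdot s_k = s_k$, $i$ identical contributions combining to $\lambda + i\gamma$; similarly $u y = yu+(1-g)$ together with the new formula for $g\cdot w_k$ produces the binomial coefficient $\tbinom{i}{2}$ in \eqref{eq:defn-rels-uniserial-u-wi}.

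\textbf{The main obstacle} is the bookkeeping for \eqref{eq:defn-rels-uniserial-v-wi}: once $vy = yv + \tfrac12 g\zt + yu$ is applied, the inductive expression for $v\cdot w_i$ mixes $v\cdot w_{i-1}$, $\tfrac12 g\zt\cdot w_{i-1}$ and $yu\cdot w_{i-1}$, each now carrying $\lambda$-, $\mu$- and $\gamma$-error terms. The natural order is therefore to establish \eqref{eq:defn-rels-uniserial-z-wi}, then \eqref{eq:defn-rels-uniserial-g-wi}, then \eqref{eq:defn-rels-uniserial-u-wi}, and only finally \eqref{eq:defn-rels-uniserial-v-wi}; after that, the computation is a careful but routine summation which can be organized by noticing that the coefficient in front of $s_{i-1}$ decomposes into one piece from $\tfrac{i(n-i+1)}{2}\cdot (\lambda + (i-1)\gamma)/1$ (arising from the error term in $v\cdot w_{i-1}$ via the inductive hypothesis on $g\cdot w_{i-1}$), one piece $\tfrac12 (n-2i+2)\mu/?$ from $\tfrac12 g\zt\cdot w_{i-1}$, and one piece from $yu\cdot w_{i-1}$, which together telescope into the stated formula.
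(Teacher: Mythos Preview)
Your approach is the same as the paper's: a structural reading of the hypotheses for part~(i), and an induction on $i$ for part~(ii) in exactly the order you suggest ($x$, then $\zt$, then $g$, then $u$, then $v$), using the commutation relations \eqref{eq:def-jordan2}--\eqref{eq:def-jordan3}. The only place where your write-up is looser than the paper's is the final $v$-step: your decomposition of the $s_{i-1}$-coefficient contains placeholders (``$/1$'', ``$/?$'') and an imprecise attribution of the $(\lambda+(i-1)\gamma)$-term, whereas the paper simply writes out $v\cdot w_{i+1} = (yv + \tfrac12 g\zt + yu)\cdot w_i$, substitutes the already-proved formulas for $v\cdot w_i$, $g\zt\cdot w_i$ and $u\cdot w_i$, and reduces the verification to three elementary polynomial identities---so tightening that paragraph into an explicit one-step recursion would complete your argument.
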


\pf \ref{item:lema-selfdual-1} follows directly from \eqref{eq:defn-rels-uniserial-dos} 
and \eqref{eq:defn-rels-uniserial-tres}, looking at the basis of $\vermados{n}{\lambda}{\mu}$.
Then \eqref{eq:weight-action} implies \eqref{eq:defn-rels-uniserial-directsum}. Now  
$x \cdot w \in S_1^{(n-2)}$ hence  there exists $\gamma \in \ku$ such that $x \cdot w  = \gamma s_1$ by \eqref{eq:defn-rels-uniserial-tres}.  The relations in \ref{item:lema-selfdual-2} are proved arguing  recursively. 
For\eqref{eq:defn-rels-uniserial-x-si} the defining relations of $\D$ are used.
For  \eqref{eq:defn-rels-uniserial-x-wi} we have 
\begin{align*}
x \cdot w_{i + 1} =  xy \cdot w_i  = (y+ \tfrac{1}{2} x)x \cdot w_i  = 
\gamma (y+ \tfrac{1}{2} x)  \cdot s_{i+1} \overset{\eqref{eq:defn-rels-uniserial-x-si}}{=} \gamma s_{i+2}.
\end{align*}
The proof of \eqref{eq:defn-rels-uniserial-z-wi} is direct starting from \eqref{eq:vermados-1}:
\begin{align*}
\zt \cdot w_{i+1} = \zt y\cdot w_{i}  = (y \zt  -2 y) \cdot w_{i} = 
(n-2(i + 1)) w_{i+1} + \mu  s_{i+1}.
\end{align*}
The proof of \eqref{eq:defn-rels-uniserial-g-wi} also starts from \eqref{eq:vermados-1}:
\begin{align*}
g\cdot w_{i + 1} &= gy \cdot w_i  = (y+  x)g\cdot w_i = (y+  x) \cdot (w_i + (\lambda + i\gamma) s_i) 
\\
& \overset{\eqref{eq:defn-rels-uniserial-x-si}}{=}  w_{i +1}+ (\lambda + i\gamma) s_{i +1} + \gamma s_{i+1}  =  w_{i +1}+ (\lambda + (i+1)\gamma) s_{i +1}.
\end{align*}
To prove  \eqref{eq:defn-rels-uniserial-u-wi} we start from \eqref{eq:vermados-2} and argue:
\begin{align*} 
u\cdot w_{i+1} &= u y\cdot w_{i} = (yu  +1 - g ) \cdot w_{i} 
\overset{\eqref{eq:defn-rels-uniserial-g-wi}}{=} \left(-(i\lambda + \tfrac{i(i-1)}{2}\gamma)   - (\lambda + i\gamma) \right) s_i
\\&= -\left((i+1)\lambda x+ \tfrac{i(i+1)}{2}\gamma\right) \, s_{i}
\end{align*}
as needed.
Finally we prove \eqref{eq:defn-rels-uniserial-v-wi}. First we observe that
\begin{align*}
-g\zeta\cdot w_i &= g \cdot (\tfrac{(n-2i)}{2} w_i - \mu  s_i) =  \tfrac{(n-2i)}{2}w_i + (\tfrac{(n-2i)}{2}\lambda +\tfrac{(n-2i)i}{2} \gamma - \mu )s_i.
\end{align*}
Then we proceed recursively:
\begin{align*}
v\cdot w_{i+1} =& v y\cdot w_{i} = (y v -g \zeta  + y u)  \cdot w_{i} 
\\
=& \tfrac{i(n-i+1)}{2} w_{i}  
+\left(\tfrac{i(n-2i+2)}{2} \lambda + \tfrac{i(i-1)(n -2i+2)}{4} \gamma - i \mu \right) s_{i}
\\
& + \tfrac{(n-2i)}{2}w_i + (\tfrac{(n-2i)}{2}\lambda +\tfrac{(n-2i)i}{2} \gamma - \mu )s_i - \left(i\lambda + \tfrac{i(i-1)}{2}\gamma \right) \,  s_{i};
\end{align*}
it remains to perform the routine verification of  the following equalities:
\begin{align*}
\tfrac{i(n-i+1)}{2} + \tfrac{n-2i}{2} &= \tfrac{(i + 1) \left(n-(i + 1)+1\right)}{2},
\\
\tfrac{i(n-2i+2)}{2} + \tfrac{n-2i}{2} -i &=  \tfrac{(i + 1) \left(n- 2(i + 1) + 2\right)}{2},
\\
\tfrac{i(i-1)(n -2i+2)}{4} +\tfrac{(n-2i)i}{2} - \tfrac{i(i-1)}{2} &= \tfrac{i(i+1) \left(n -2(i+1)+2\right)}{4},
\end{align*}
a task left to the readers. \epf

Fix $\gamma \in \ku$ and let  $N$ be the submodule of 
$\vermados{n}{\lambda}{\mu}$ generated by 
$x \cdot \stt$, $y^{n+1} \cdot \stt$, $x \cdot \wtt  - \gamma y \cdot \stt$  $y^{n+1} \cdot \wtt$. 
That is, the module $S =\vermados{n}{\lambda}{\mu}/N$ 
is presented by generators $\stt$ and $\wtt$ with defining relations
\eqref{eq:vermados-1}, \eqref{eq:vermados-2}, 
\begin{align}
\label{eq:defn-rels-uniserial-s}
x \cdot \stt &= 0, &y^{n+1} \cdot \stt  &= 0, 
\\\label{eq:defn-rels-uniserial-w}
x \cdot \wtt &= \gamma y \cdot \stt, & y^{n+1} \cdot \wtt  &= 0.
\end{align}  

\begin{prop}\label{prop:Sn} Let $S$ be as above.
\begin{enumerate}[leftmargin=*,label=\rm{(\roman*)}]
\item\label{item:Sn1}  The module $S$ has dimension $n^2$ if an only if
\begin{align}\label{eq:conditions-Sn-gamma}
\lambda + \tfrac{n}{2}\gamma  &=0 & &\text{ and }&
\mu &=0.
\end{align}

\item\label{item:Sn2} Set $\usrldos{n}{\gamma} \coloneqq S$ when $(\lambda, \mu) = (-\tfrac{n}{2}\gamma, 0)$; then 
$\usrldos{n}{\gamma}  \simeq \usrldos{n}{\gamma'}$ if and only if $\gamma = t\gamma'$ for some $t\in \kut$.

\item\label{item:Sn3} For any $\gamma$, $\usrldos{n}{\gamma}$  is an extension of $L_n$ by $L_n$. 
Any extension is like this and $\dim \Ext^1_{\D}(\smp{n},\smp{n}) = 1$, $n\in \N_0$.
\end{enumerate}
\end{prop}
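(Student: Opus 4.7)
The plan hinges on Lemma~\ref{lema:-selfdual}, whose hypotheses \eqref{eq:defn-rels-uniserial-dos}--\eqref{eq:defn-rels-uniserial-tres} are guaranteed by the defining relations \eqref{eq:defn-rels-uniserial-s}--\eqref{eq:defn-rels-uniserial-w} of $S$ (the second because $x\cdot w = \gamma\,y\cdot s \in S_1$ projects to $0$ in $S_2$). Consequently, $S$ is linearly spanned by $\{s_i,w_i : i\in\N_0\}$ with $\D$-action given by \eqref{eq:defn-rels-uniserial-x-si}--\eqref{eq:defn-rels-uniserial-v-wi}, and the relations $y^{n+1}\cdot s = y^{n+1}\cdot w = 0$ force $s_i = w_i = 0$ for $i\geq n+1$, giving $\dim S \leq 2(n+1)$.

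\textbf{Part (i) — main obstacle.} The crux is to detect when this bound is attained; the obstruction is that the formulas for $v$ could conspire to force additional collapses of $s_n$ or $w_n$. The sharpest consistency test applies $v$ to the relation $y^{n+1}\cdot w = 0$: formula \eqref{eq:defn-rels-uniserial-v-wi} at $i = n+1$ annihilates the $w_n$-coefficient automatically, leaving
\begin{align*}
0 \;=\; v\cdot w_{n+1} \;=\; -\tfrac{n(n+1)}{2}\Bigl(\lambda + \tfrac{n}{2}\gamma\Bigr) s_n \;-\;(n+1)\mu\, s_n.
\end{align*}
Maximal dimension requires $s_n \neq 0$, which forces exactly $\mu = 0$ and $\lambda + \tfrac{n}{2}\gamma = 0$. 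Conversely, under these conditions a direct inspection of Lemma~\ref{lema:-selfdual} shows that $\ku\{s_0,\dots,s_n,w_0,\dots,w_n\}$ is stable under every generator (the delicate closures $v\cdot w_0 = 0$ and $v\cdot w_{n+1} = 0$ both hold by our choice of $\lambda,\mu$), so $\dim S = 2(n+1)$.

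\textbf{Part (ii).} For $\gamma,\gamma'\neq 0$ (the $\gamma = 0$ case is a direct sum and immediate), any nonzero vector $a s + b w \in \usrldos{n}{\gamma}^{(n)}$ annihilated by $x$ satisfies $b\gamma\, s_1 = 0$, hence $b=0$. So any $\D$-isomorphism $\varphi:\usrldos{n}{\gamma}\to\usrldos{n}{\gamma'}$ sends $s$ to $\alpha s'$ for some $\alpha\in\kut$, and then $\varphi(w) = \beta w' + \delta s'$ with $\beta\in\kut$. Comparing $\varphi(x\cdot w) = x\cdot\varphi(w)$ yields $\alpha\gamma\, s'_1 = \beta\gamma'\, s'_1$, hence $\gamma = t\gamma'$ with $t=\beta/\alpha$. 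The converse is realized by rescaling $w$.

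\textbf{Part (iii).} Given any extension $0 \to \smp{n} \to E \to \smp{n} \to 0$, exactness in each weight yields $\dim E^{(n)} = 2$; choose $s$ spanning the top of the submodule and $w \in E^{(n)}\setminus \ku s$ lifting a top of the quotient. Then $E = \langle s,w\rangle$, so $E$ is a quotient of some $\vermados{n}{\lambda}{\mu}$. Weight considerations force $y^{n+1}\cdot w \in \smp{n}^{(-n-2)} = 0$ and $x\cdot w \in \smp{n}^{(n-2)} = \ku\, y\cdot s$, say $x\cdot w = \gamma\, y\cdot s$; hence $E$ is a quotient of the module $S$ introduced just before Proposition~\ref{prop:Sn}, and part (i) forces $E \simeq \usrldos{n}{\gamma}$. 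A strict equivalence of extensions fixes $s$ and the projection (so $\alpha = \beta = 1$ in the notation of (ii)), which makes $\gamma$ an invariant; hence equivalence classes of extensions are in bijection with $\ku$. A short Baer-sum computation shows that $(E_{\gamma_1}, E_{\gamma_2}) \mapsto E_{\gamma_1+\gamma_2}$, so $\Ext^1_{\D}(\smp{n},\smp{n}) \simeq \ku$ as $\ku$-vector spaces.
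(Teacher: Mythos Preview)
Your argument for part~(i) has a genuine gap. You apply only $v$ to the relation $w_{n+1}=0$, obtaining
\[
0 \;=\; v\cdot w_{n+1} \;=\; -\tfrac{n(n+1)}{2}\bigl(\lambda+\tfrac{n}{2}\gamma\bigr)s_n \;-\;(n+1)\mu\,s_n,
\]
and then assert that $s_n\neq 0$ forces both $\mu=0$ and $\lambda+\tfrac{n}{2}\gamma=0$. But this is a \emph{single} scalar equation, namely $\tfrac{n}{2}\bigl(\lambda+\tfrac{n}{2}\gamma\bigr)+\mu=0$; it does not separate the two conditions. For instance, with $n=2$ one could take $\lambda=1$, $\gamma=0$, $\mu=-1$ and your displayed relation is satisfied, yet \eqref{eq:conditions-Sn-gamma} fails. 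The paper closes this gap by also applying $u$ to $w_{n+1}$: formula \eqref{eq:defn-rels-uniserial-u-wi} at $i=n+1$ gives
\[
u\cdot w_{n+1} \;=\; -(n+1)\bigl(\lambda+\tfrac{n}{2}\gamma\bigr)s_n,
\]
which (again assuming $s_n\neq 0$) yields $\lambda+\tfrac{n}{2}\gamma=0$ directly; feeding this back into your $v$-equation then gives $\mu=0$. So both generators $u$ and $v$ are needed to extract the two independent constraints.

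Apart from this, your treatment of parts~(ii) and~(iii) is sound and in fact more explicit than what the paper records. In particular, your identification of the $x$-annihilated vectors in the top weight space to constrain an isomorphism in~(ii), and your Baer-sum verification of linearity in~(iii), are cleaner than the paper's somewhat elliptical handling of those points.
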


For our conventions on extensions, see Subsection \ref{subsec:extensions}.

\pf We keep the notation above and apply Lemma \ref{lema:-selfdual}; thus $\dim S \leq n^2$. 
By \eqref{eq:defn-rels-uniserial-u-wi} and \eqref{eq:defn-rels-uniserial-v-wi}, we see  that
\begin{align}\label{eq:defn-rels-uniserial-u-wi2}
u \cdot w_{n+1} &=  - (n+1)\left(\lambda + \tfrac{n}{2}\gamma \right) \,  s_{n},
\\ \label{eq:defn-rels-uniserial-v-wi2}
v\cdot w_{n+1}&=  
-(n+1)\left(\tfrac{n}{2} \lambda + \tfrac{n^2}{4} \gamma + \mu \right) s_{n}.
\end{align}
If $\lambda + \tfrac{n}{2}\gamma \neq  0$, then \eqref{eq:defn-rels-uniserial-u-wi2} says that $\stt_{n} \in N$.
If $\lambda + \tfrac{n}{2}\gamma = 0$ but $\mu \neq 0$, then \eqref{eq:defn-rels-uniserial-v-wi2} says that $\stt_{n} \in N$.
In both cases,  $\stt \in N$ by \eqref{eq:defn-rels-uniserial-x-si}, hence $\dim S \leq n$. 
Conversely assume that \eqref{eq:conditions-Sn-gamma} holds. Then \eqref{eq:weight-action} implies that
\begin{align*}
N \subset \oplus_{k\in \N} \vermados{n}{\lambda}{\mu}^{(n - 2k)}.
\end{align*}
Then $N \cap  \vermados{n}{\lambda}{\mu}^{(n)}$, in particular $\stt \notin N$.  
Now there are morphisms of $\D$-modules $\iota:\smp{n} \to S_1$
and $\widetilde{\iota}:\smp{n} \to S_2$
given by $\iota (\zn{n}{i}) = s_i$ and $\widetilde{\iota} (\zn{n}{i}) = \overline{w}_i$, $i \in \I_{0,n}$.
Now $s\neq 0$ in $S$ 
thus $\iota$ is injective (because $\smp{n} $ is simple) and $\{s_0,\dots,s_n\}$ are linearly independent.
Similarly $\overline{w} \neq 0$ in $S_2$,  
 $\widetilde{\iota}$ is injective and $\{w_0,\dots,w_n\}$ are linearly independent.
 Hence $\dim S = n^2$ and \ref{item:Sn1} is proved.

\ref{item:Sn2}: The $\D^0$-module $\usrldos{n}{0}^{(n)} \simeq \indescdos{n}{0}{0}$ is decomposable
while $\usrldos{n}{\gamma}^{(n)} \simeq \indescdos{n}{\gamma}{0}$ is indecomposable  when $\gamma \neq 0$.

$\pi:\usrldos{n}{\gamma} \to \smp{n}$ given by
$\pi(s_i) =0$ and $\pi(w_i) = \zn{n}{i}$, $i \in \I_{0,n}$. Now $s\neq 0$ in $\usrldos{n}{\gamma}$,
thus $\iota$ is injective (because $\smp{n} $ is simple) and $\{s_0,\dots,s_n\}$ are linearly independent;
while $\{w_0,\dots,w_n\}$ are linearly independent because they project to a basis of $\smp{n}$.

Indeed let $\iota\colon L_n\longrightarrow \usrldos{n}{\gamma} $ given by $s_i\mapsto s_i$ and 
$\pi\colon \usrldos{n}{\gamma}  \longrightarrow L_n$ given by $s_i\mapsto 0$, $w_i \mapsto s_i$. Then the following sequence is
is exact
\begin{align*}
\xymatrix{ L_n  \ar@{^{(}->}[rr] ^{\iota} & & \usrldos{n}{\gamma}  \ar@{->>}[rr] ^{\pi} & & L_{n} },
\end{align*}

\epf

\begin{remark}
In the context of Lemma \ref{lema:-selfdual}, 
if $\dim S < \infty$ and $x \cdot s = 0$, then $S_1$ actually  belongs to $\lmod{U(\spl_2)}$, thus $y^{n+1} \cdot s  = 0$;
similarly $\overline{w} = 0$ implies that $y^{n+1} \cdot \overline{w}  = 0$, hence $y^{n+1} \cdot w  = 0$ by looking at the weights of $S_1$.
\end{remark}

\begin{remark}
The set   $\{s_0,\dots,s_n,w_0,\dots,w_n\}$ is a basis of $\usrldos{n}{\gamma}$.
Set $w_{n+1} = s_{n+1} = w_{-1} = s_{-1} = 0$ by convention.  Then 
the action is given by
\begin{align}\label{eq:Sn-D-action}
\begin{aligned}
x \cdot s_{i} &= 0, &   x\cdot w_i &= \gamma s_{i+1}, 
\\  
y \cdot s_{i} &= s_{i+1}, & y\cdot w_i &= w_{i+1}, 
\\
g\cdot s_{i} &= s_{i}, & g\cdot w_i &= w_i -  \tfrac{n - 2i}{2}\gamma \,s_i,  
\\
\zt\cdot s_{i}&= (n-2i) s_{i},  & 
\zt\cdot w_i &= (n-2i) \,w_i, 
\\  
u \cdot s_{i} &= 0, & u \cdot w_i &=   \tfrac{i(n+1-i)}{2} \gamma \,  s_{i-1}, 
\\
v \cdot s_{i} &= \tfrac{i (n-i+1)}{2} s_{i-1}, & 
v\cdot w_i &= \tfrac{i(n-i+1)}{2} w_{i-1}  
- \tfrac{i(n -2i+2) (n+1-i)}{4} \gamma  \, s_{i-1}.
\end{aligned}
\end{align}

\end{remark}

\section{Extensions of simple modules}\label{sec:extensions-simple-modules}
\subsection{Generalities}\label{subsec:extensions}
Let $n,m \in \N_0$. The goal of  this Section is to compute the vector spaces
$\Ext^1_{\D}(\smp{n},\smp{m})$. 
To fix the notation,   an extension of $\smp{n}$ by $\smp{m}$ 
is a short exact sequence
\begin{align}\label{eq:extension-Ln-Lm}
\xymatrix{ \smp{n}  \ar@{^{(}->}[rr] ^{\iota} & & T \ar@{->>}[rr] ^{\pi} & & \smp{m} .}
\end{align}

By abuse of notation we say also that $T$ is an extension   of $\smp{n}$ by $\smp{m}$. 

\begin{remark}\label{rem:extensions-dual}
Let $H$ be a Hopf algebra and $M, N, P \in \lmod{H}$.
Given  an extension $\xymatrix{M  \ar@{^{(}->}[r] ^{\iota} &  P \ar@{->>}[r] ^{\pi} &  N,}$ the sequence    
$\xymatrix{ N^*  \ar@{^{(}->}[r] ^{\pi^*} &  P^* \ar@{->>}[r] ^{\iota^*} &  M^* }$
is  exact.
Then $\Ext^1_{H}(M, N) \simeq \Ext^1_{H}( N^*, M^*)$. Since $\smp{p}^* \simeq \smp{p}$, $p\in \N_0$, we have
\begin{align*}
\Ext^1_{\D}(\smp{n}, \smp{m}) &\simeq \Ext^1_{\D}( \smp{m}, \smp{n}), & m,n&\in \N_0.
\end{align*}

\end{remark}

Fix an extension \eqref{eq:extension-Ln-Lm}
and  identify $\smp{n}$ as a submodule of $T$ via $\iota$. 
Pick $w_m(0)\in T$ such that $\pi(w_m(0)) = z_m(0)\in \smp{m}$ and set 
\begin{align*}
w_m(i) &\coloneqq y^i \cdot w_m(0), & i &\in\I_{0,m}.
\end{align*}
Then $\{\zn{n}{0},\dots,\zn{n}{n},w_m(0),\dots,w_m(m)\}$ is a linear basis of $T$.
Also set
\begin{align*}
r_d &\coloneqq d \cdot w_m(0), & d &\in \Nuc = \ku\langle x,u,g^{\pm 1}\rangle. 
\end{align*}
Now $\Nuc$ acts by 0 on $\smp{m}$, hence $r_d \in \smp{n}$ for  $d\in \Nuc$.
Also $v \cdot w_m(0) \in \smp{n}$.

We start  giving restrictions on the existence of non-trivial extensions.
\begin{lemma}\label{lema:non-trivial-extensions-Ln-Lm} If  $\Ext^1_{\D}(\smp{n},\smp{m}) \neq 0$,
then $m-n\in \{2, 0, -2\}$.
\end{lemma}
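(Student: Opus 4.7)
The plan is to show that every extension as in \eqref{eq:extension-Ln-Lm} splits whenever $m-n \notin \{-2,0,2\}$. By Remark \ref{rem:extensions-dual}, $\Ext^1_\D(\smp{n},\smp{m}) \simeq \Ext^1_\D(\smp{m},\smp{n})$, so I may assume $m \geq n$ and argue that the extension splits whenever $m - n \notin \{0,2\}$. The whole argument is pure weight bookkeeping, using the key fact that $\smp{n}^{(k)} = 0$ unless $|k|\leq n$ and $k \equiv n \pmod{2}$.

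First I would handle the parity obstruction. Suppose $n \not\equiv m \pmod{2}$. Then for each $k \equiv n \pmod{2}$, the exact sequence $\smp{n}^{(k)} \hookrightarrow T^{(k)} \twoheadrightarrow \smp{m}^{(k)} = 0$ forces $T^{(k)} = \smp{n}^{(k)}$, so $\smp{n} = \bigoplus_{k \equiv n} T^{(k)}$. Its complement $T' = \bigoplus_{k \equiv m} T^{(k)}$ is a $\D$-submodule because the generators $g^{\pm1}, \zt, x, y, u, v$ all shift weights by $0$ or $\pm 2$ (see \eqref{eq:weight-action}), hence preserve the parity of the weight. A dimension count shows that $\pi|_{T'}\colon T' \to \smp{m}$ is an isomorphism, splitting the sequence.

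Next I would treat the remaining case: $m \equiv n \pmod{2}$ and $m \geq n+4$. Pick any lift $w \in T$ of $z_m(0) \in \smp{m}$. Since $\pi$ kills the following expressions, they all land in $\smp{n}$ in the indicated weight, and that weight space vanishes because $m-2 > n$ and $m+2 > n$:
\begin{align*}
(\zt - m) \cdot w, \ (g-1)\cdot w &\in \smp{n}^{(m)} = 0, & u\cdot w, \ v\cdot w &\in \smp{n}^{(m+2)} = 0, \\
x \cdot w &\in \smp{n}^{(m-2)} = 0, & y^{m+1}\cdot w &\in \smp{n}^{(-m-2)} = 0.
\end{align*}
Hence $w$ satisfies the defining relations \eqref{eq:defn-rels-verma} and \eqref{eq:defn-rels-simple} of $\smp{m}$, so the submodule $\langle w\rangle \subset T$ is a nonzero quotient of $\smp{m}$. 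Being simple, $\langle w\rangle \simeq \smp{m}$, and since $\pi(w) = z_m(0) \neq 0$ this submodule maps isomorphically onto $\smp{m}$, yielding a splitting of \eqref{eq:extension-Ln-Lm}.

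Combining both cases, if $m \geq n$ and $m - n \notin \{0,2\}$, the extension is trivial. Applying Remark \ref{rem:extensions-dual} to the case $m < n$ gives the symmetric conclusion, so $\Ext^1_\D(\smp{n},\smp{m}) \neq 0$ forces $m - n \in \{-2,0,2\}$. There is no real obstacle here: the argument is essentially counting available weight spaces in $\smp{n}$ and using that the difference $|m-n|$ being too large leaves no room for a non-trivial coupling between the two simple constituents.
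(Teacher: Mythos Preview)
Your proof is correct and takes a genuinely different route from the paper's. One minor clarification: when you pick the lift $w$, you should take it in $T^{(m)}$ (which is possible since $\pi$ is graded and $T^{(m)} \twoheadrightarrow \smp{m}^{(m)}$); otherwise the assertion that $(\zt - m)\cdot w \in \smp{n}^{(m)}$, etc., is not immediate. With that, everything goes through.

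The paper argues instead via the commutative subalgebra $\Nuc = \ku\langle x, u, g^{\pm 1}\rangle$ and the quotient $\pi\colon \D \twoheadrightarrow U(\spl_2)$. Fixing a lift $w_m(0)$ of $z_m(0)$, one sets $r_d = d\cdot w_m(0)$ for $d \in \Nuc$. If $r_x = r_u = r_{g-1} = 0$, an induction shows $x$, $u$, $g-1$ act by zero on all of $T$, so $T$ is a $U(\spl_2)$-module and splits by semisimplicity. Otherwise one of the three obstructions is nonzero; a short computation with the relations \eqref{eq:def-jordan2}--\eqref{eq:def-jordan3} shows it is killed by $v$, hence is a nonzero multiple of $\zn{n}{0}$, and comparing $\zt$-weights forces $n = m+2$, $n = m$, or $n = m-2$ in the three cases. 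This handles parity and the weight gap in one stroke, without invoking duality.

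Your approach is more elementary: it uses only the weight decomposition and the presentation \eqref{eq:defn-rels-verma}--\eqref{eq:defn-rels-simple} of $\smp{m}$, avoiding both the exact sequence to $U(\spl_2)$ and the semisimplicity of the latter. The paper's approach, by contrast, locates the obstruction structurally inside $\Nuc$ and is more suggestive of what actually happens in the boundary cases $m-n \in \{-2,0,2\}$ treated in the subsequent propositions.
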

\noindent\emph{Proof.} \ 
Let $T$ be an extension   of $\smp{n}$ by $\smp{m}$ and keep the notation above.

\begin{claim} If $r_x = r_{g-1} = r_u = 0$, then the extension $T$ is trivial. 
\end{claim}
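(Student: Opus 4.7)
The plan is to use the exact sequence $\Nuc \hookrightarrow \D \twoheadrightarrow U(\spl_2(\ku))$ to reduce the claim to classical $\spl_2$-representation theory. Specifically, I aim to show that the hypothesis forces the whole module $T$ to be annihilated by the augmentation ideal $\Nuc^+$, so that $T$ descends to a finite-dimensional $U(\spl_2(\ku))$-module, where Weyl's theorem on complete reducibility splits the extension.

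First, since $\Nuc = \ku\langle x, u, g^{\pm 1}\rangle$ is a commutative algebra generated by $x$, $u$, $g^{\pm 1}$, whose images under the counit are $0$, $0$, $1$ respectively, the augmentation ideal $\Nuc^+$ is generated as a left ideal of $\Nuc$ by $x$, $u$ and $g-1$. Hence the hypothesis $r_x = r_u = r_{g-1} = 0$ is exactly the statement that $\Nuc^+ \cdot w_m(0) = 0$.

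Next, I use that $\Nuc$ is a \emph{normal} Hopf subalgebra of $\D$, since it is the kernel of $\pi$ in the abelian exact sequence recalled in the Introduction; thus $\Nuc^+ \D = \D \Nuc^+$. For any $d \in \D$ and $h \in \Nuc^+$, I can then write $h d = \sum d_i h_i$ with $h_i \in \Nuc^+$, yielding $hd \cdot w_m(0) = \sum d_i (h_i \cdot w_m(0)) = 0$. Consequently, $\Nuc^+$ annihilates the cyclic submodule $\D \cdot w_m(0)$. Since $\smp{n}$ already factors through $\pi$, one also has $\Nuc^+ \cdot \smp{n} = 0$; and since $T = \smp{n} + \D \cdot w_m(0)$, it follows that $\Nuc^+ \cdot T = 0$.

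Thus $T$ is a finite-dimensional module over $\D/\D\Nuc^+ \simeq U(\spl_2(\ku))$, and \eqref{eq:extension-Ln-Lm} becomes a short exact sequence of such modules. Weyl's complete reducibility theorem splits it in $\lmod{U(\spl_2(\ku))}$, and because the $\D$-module structures on $\smp{n}$, $\smp{m}$ and $T$ all factor through $\pi$, the splitting is automatically one of $\D$-modules, proving triviality of the extension. The only delicate point in the argument is the middle step --- verifying that normality of $\Nuc$ propagates annihilation from $w_m(0)$ to the entire submodule it generates --- which is where the non-commutativity of $\D$ enters; everything else is a bookkeeping consequence of the structural fact that $\Nuc \subset \D$ is a normal Hopf subalgebra with semisimple quotient in finite dimensions.
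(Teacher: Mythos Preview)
Your proof is correct and follows the same overall strategy as the paper: show that $\Nuc^+$ annihilates all of $T$, deduce that $T$ is a module over $\D/\D\Nuc^+ \simeq U(\spl_2(\ku))$, and then invoke complete reducibility. The only difference lies in how the annihilation is propagated from $w_m(0)$ to the whole module. The paper does this by an explicit induction on $i$, using the commutation relations of $x$, $u$, $g-1$ with $y$ to check directly that $x\cdot w_m(i) = u\cdot w_m(i) = (g-1)\cdot w_m(i) = 0$ for all $i$. You instead invoke the normality of $\Nuc$ in $\D$ (equivalently, that $\D\Nuc^+$ is a two-sided ideal, which is forced by the Hopf quotient $\D \twoheadrightarrow U(\spl_2(\ku))$) to conclude in one stroke that $\Nuc^+$ kills the cyclic submodule $\D\cdot w_m(0)$. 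Your argument is cleaner and would apply verbatim to any normal Hopf subalgebra; the paper's computation is more self-contained and makes the mechanism visible at the level of the defining relations.
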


We claim that $x$, $g-1$ and $u$ act by $0$ on $T$. Since they
act trivially on $\smp{n}$, it is enough to consider the  action on the $w_m(i)$'s. 
We argue recursively, the case $i=0$ being the hypothesis. 
If $x$, $g-1$ and $u$ act by $0$ in $w_m(i)$, then
\begin{align*}
&x \cdot w_m(i+1) = xy \cdot w_m(i) = (yx + \frac{1}{2}x^2)w_m(i) = 0,\\
&u \cdot w_m(i+1) = uy \cdot w_m(i) = (yu + (1-g))\cdot w_m(i) = 0,\\
&(1-g) \cdot w_m(i+1) = (1-g)y \cdot w_m(i) = (y(1-g) - gx) \cdot w_m(i) = 0.
\end{align*}
Hence $T\in \lmod{\D/\D\Nuc^+}$; since $\D/\D\Nuc^+ \simeq U(\spl_2(\ku))$, the extension is trivial.

\medbreak
Thus, if $T$ is a non-trivial extension, then at least one of  $r_x$, $r_{g-1}$, $r_u$ is not zero.
Let $s\in \smp{n}$ be such that $\zeta\cdot w_m(0) = m w_m(0) + s$.

\begin{case}
$r_u\neq 0$. Since
\begin{align*}
v\cdot r_u = vu \cdot w_m(0) = (uv - \tfrac{1}{2} u^2)\cdot w_m(0) = 0,
\end{align*}
there exists $a\in \kut$ such that $r_u = a \zn{n}{0}$. We compute in two ways:
\begin{align*}
\zt \cdot r_u &= \zt u \cdot w_m(0) = (u\zt + 2u)\cdot w_m(0) = m r_u + u\cdot s + 2 r_u = (m+2) r_u
\end{align*}
and also $\zt \cdot r_u = a \zt\cdot \zn{n}{0} = n r_u$.
We conclude that $n = m+2$.
\end{case}   

\begin{case} $r_u = 0$ and $r_{g-1} \neq 0$.  Since
\begin{align*}
v\cdot r_{g-1} = v(g-1) \cdot w_m(0) = ((g-1)v + gu)\cdot w_m(0) = g\cdot r_u = 0,
\end{align*}
there exists $a\in \kut$ such that $r_{g-1} = a \zn{n}{0}$. We compute in two ways:
\begin{align*}
\zt \cdot r_{g-1} &= (g-1)\zt\cdot w_m(0) = m r_{g-1} + (g-1)\cdot s = m r_{g-1}
\end{align*}
and also $\zeta \cdot r_{g-1} = a \zt\cdot \zn{n}{0} = n r_{g-1}$. We conclude that $n = m$.
\end{case}  
\begin{case} $r_u = r_{g-1} = 0$ and $r_x \neq 0$.  Since
\begin{align*}
v\cdot r_x = vx \cdot w_m(0) = (xv + (1-g) + xu)\cdot w_m(0) = 0,
\end{align*}
there exists $a\in \kut$ such that $r_x = a \zn{n}{0}$. We compute in two ways:
\begin{align*}
\zeta \cdot r_x &= \zt x \cdot w_m(0) = (x\zt - x)\cdot w_m(0) = m r_x + x\cdot s - 2 r_x = (m-2) r_x
\end{align*}
and also $\zt \cdot r_x = a \zt\cdot \zn{n}{0} = n r_x$. We conclude that $n = m-2$. \qed
\end{case}  

\subsection{Extensions of \texorpdfstring{$\smp{n}$}{} by \texorpdfstring{$\smp{n\pm 2}$}{}} Let $n\in \N_0$.
We next introduce  the $\D$-module $\usrl{n,1}$ generated by $z_n$ with defining  relations 
\begin{align}
\tag{\ref{eq:defn-rels-verma}}
&g\cdot z_n = z_n,& &\zeta \cdot z_n = -\tfrac{(n +2)}{2} z_n,  && u\cdot z_n = 0,  \qquad v\cdot z_n = 0,
\\ \label{eq:defn-rels-uniserial-}
&x^{2} \cdot z_n = 0,&  &y^{n +3}  \cdot z_n = 0, & &y^{n+1}x \cdot z_n = 0.
\end{align} 
This belongs to the family of $\D$-modules $\usrl{n,m}$ studied  in Section \ref{sec:hwr1}. 
The set
$\{\tz_{n}(i,j)\coloneqq y^i x^j \cdot z_n  \colon j\in\I_{0,1}, i\in\I_{0,n+2 - 2j}\}$ is a basis of $\usrl{n,1}$, see 
Section \ref{sec:hwr1} or prove it directly.
Given $b\in \kut$, let $\cE(b)$ be the exact sequence 
\begin{align*}
\xymatrix{ \smp{n}  \ar@{^{(}->}[rr] ^{\iota_b} & & \usrl{n,1} \ar@{->>}[rr] ^{\pi} & & \smp{n+2} },
\end{align*}
where $\iota_b$  and  $\pi$ are  determined by $\iota_b(\zn{n}{i})= b\tz_{n}(i,1)$ and $\pi(\tz_{n}(i,1)) = 0$ for $i\in\I_{0,n}$;
and   $\pi(\tz_{n}(k,0)) = z_{n+2}(k)$ for $k\in\I_{0,n+2}$.

\begin{prop}\label{prop:ext-Ln-Ln+2} 
Any extension $\xymatrix{ \smp{n}  \ar@{^{(}->}[r]  &  T \ar@{->>}[r]  &  \smp{n+2} }$ 
is either trivial or else isomorphic to $\cE(b)$ for a unique
$b\in \kut$.  Hence $T$ is isomorphic either
to $\smp{n}\oplus \smp{n+2}$ or to $\usrl{n,1}$ and
\begin{align} \label{eq:ext-Ln-Ln+2}
\dim \Ext^1_{\D}(\smp{n},\smp{n+2})= 1.
\end{align}
\end{prop}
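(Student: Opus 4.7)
The plan is to identify the middle term of any non-trivial extension as $\usrl{n,1}$ via Theorem \ref{thm:indecomposables-rank-1-class}, and then to parametrise the equivalence classes of extensions on that module. First, assume that \eqref{eq:extension-Ln-Lm} is non-trivial (if trivial, $T \simeq \smp{n} \oplus \smp{n+2}$). Since the composition factors are simple and non-isomorphic, $T$ must be indecomposable, because any proper direct-sum decomposition would automatically split the sequence. A weight computation shows $\dim T^{(n+2)} = 1$; choosing a lift $w$ of the highest weight vector of $\smp{n+2}$, the submodule $\langle w \rangle$ surjects onto $\smp{n+2}$ and intersects $\smp{n}$ in either $0$ or all of $\smp{n}$. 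The first option would split the sequence, so $\langle w \rangle = T$, and $T \in \hwmod$ has $\hw T = n+2$ and $\hwrank T = 1$.

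By Theorem \ref{thm:indecomposables-rank-1-class}, $T \simeq \usrl{n', m'}$ for some $(n', m') \in \N_0^2$. Matching the highest weight $n' + 2m' = n+2$ against the dimension identity of Lemma \ref{lemma:basis-T-n-m} together with $\dim T = 2n+4$ reduces to the equation $(m'+1)(n+3-m') = 2(n+2)$, whose only solution in $\N_0$ is $(n', m') = (n, 1)$. Hence every non-trivial extension has middle term isomorphic to $\usrl{n,1}$ as a $\D$-module.

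To finish, the plan is to parametrise extension structures on $\usrl{n,1}$. The key observation is $\End_{\D}(\usrl{n,1}) = \ku \cdot \id$: any endomorphism acts as a scalar on the one-dimensional weight space $\usrl{n,1}^{(n+2)}$, which generates the module as a highest weight module, so it must be a scalar on the whole of $\usrl{n,1}$. Given $(T, \iota, \pi)$ with $T \simeq \usrl{n,1}$, the inclusion $\iota$ factors through the socle $\mathtt{L} \simeq \smp{n}$ of Lemma \ref{lemma:Ln-is-sub-module-of-Tnm} and thus equals $b \psi$ for a unique $b \in \kut$; similarly $\pi$ is a scalar multiple of the canonical projection. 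The automorphism group $\kut$ rescales $\iota$ and $\pi$ inversely, so after normalising the projection to the canonical one we recover exactly the family $\cE(b)$, with distinct $b$'s pairwise inequivalent. This produces a $\kut$-equivariant bijection $\ku \simeq \Ext^1_{\D}(\smp{n}, \smp{n+2})$ (with $0$ matching the trivial class), which yields \eqref{eq:ext-Ln-Ln+2}. The main technical obstacle is step two---confirming that $(n,1)$ is the unique dimension-matching pair---while the remaining identifications flow directly from the setup and the endomorphism computation.
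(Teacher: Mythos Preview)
Your proof is correct and takes a genuinely different route from the paper's. The paper proceeds by a direct, hands-on computation: it picks a lift $\wb$ of the highest weight vector $z_{n+2}(0)$, adjusts it to lie in the one-dimensional weight space $T^{(n+2)}$, and then reads off from the weight decomposition that $y^{n+3}$, $u$, $v$ kill this vector, that $g$ fixes it, and that $x\cdot w_{n+2}(0) = c\,z_n(0)$ for a uniquely determined scalar $c$; the extension is then identified with $\cE(c^{-1})$ when $c\neq 0$. No appeal to Theorem~\ref{thm:indecomposables-rank-1-class} is made.

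Your argument instead invokes that classification theorem to recognise the indecomposable $T$ as some $\usrl{n',m'}$, then solves for $(n',m')$ numerically, and finally parametrises extension structures on $\usrl{n,1}$ using $\End_{\D}(\usrl{n,1})=\ku$. This is cleaner and more structural, and it illustrates nicely that Proposition~\ref{prop:ext-Ln-Ln+2} is a corollary of the rank-one classification. The paper's approach, by contrast, is self-contained (independent of Section~\ref{sec:hwr1}) and makes the parameter $c$ explicit. Two small points you should tighten: first, the equation $(m'+1)(n+3-m')=2(n+2)$ is a quadratic in $m'$ with roots $m'=1$ and $m'=n+1$, and you should note that the second root gives $n'=-n\notin\N_0$ for $n\ge 1$ (and coincides with the first for $n=0$); second, the passage from the set-theoretic bijection $\ku\leftrightarrow\Ext^1$ to $\dim\Ext^1=1$ really uses that the $\kut$-action by scalar multiplication on $\Ext^1$ is transitive on non-zero classes, which follows from your normalisation argument once you observe that scaling a class corresponds to rescaling $\iota$ (or $\pi$) --- worth one extra sentence.
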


\begin{proof} Identify $\smp{n}$ as a submodule of $T$ via $\iota$ and pick $\wb\in T$ satisfying 
\begin{align*}
\pi(\wb) = z_{n+2}(0)\in \smp{n+2}.
\end{align*}
Hence there exist $c_0,\dots, c_{n}\in\ku$ such that
\begin{align*}
\zeta\cdot \wb =(n+2)\wb + \sum_{i=0}^{n} c_i \zn{n}{i}.
\end{align*}
Let $w_{n+2}(0) \coloneqq  \wb + \sum_{i=0}^{n}\frac{c_i}{2(i+1)} \zn{n}{i}$. Clearly $\pi(w_0) = z_{n+2}(0)$, but also
\begin{align*}
\zt\cdot w_{n+2}(0) &= (n+2)\wb + \sum_{i=0}^{n}  c_i \zn{n}{i} + \sum_{i=0}^{n}\frac{c_i (n-2i)}{2(i+1)} \zn{n}{i} \\
&=(n+2)\wb + \sum_{i=0}^{n} \frac{(2i + 2 + n - 2i) c_i}{2(i+1)}  \zn{n}{i}\\
&=(n+2) \left(\wb + \sum_{i=0}^{n} \frac{c_i}{2(i+1)} \zn{n}{i}\right) = (n+2) w_{n+2}(0).
\end{align*}
Hence $w_{n+2}(0) \in T^{n+2}$, 
\begin{align*}
w_{n+2}(j) &\coloneqq y^j \cdot w_{n+2}(0) \in T^{n+2 - 2j}, &  \pi(w_{n+2}(j)) &= z_{n+2}(j), &
j &\in \I_{n+2},
\end{align*}
and $\{\zn{n}{0},\dots,\zn{n}{n},w_{n+2}(0),\dots,w_{n+2}(n+2)\}$ is a basis of $T$. Thus
\begin{align*}
T^{k} & = \begin{cases} \ku z_n(\tfrac{n-k}{2}) \oplus \ku w_{n+2}(\tfrac{n+2 -k}{2}),& k \in \I_{-n, n};
\\ \ku w_{n+2}(0) & k = n+2,
\\  \ku w_{n+2}(n+2) & k = -n - 2.
\end{cases} 
\end{align*}
Now $y^{n+3} \cdot w_{n+2}(0) \in T^{-n-4}$ and $u\cdot w_{n+2}(0), v\cdot w_{n+2}(0)  \in T^{n+4}$  by \eqref{eq:weight-action}, i.~e.
\begin{align}\label{eq:extension-verma1}
y^{n+3} \cdot w_{n+2}(0) &= 0, &   u \cdot w_{n+2}(0) & = 0, &   v \cdot w_{n+2}(0) & = 0.
\end{align}
Also $g \cdot w_{n+2}(0) \in T^{n+2}$, i.~e. $g \cdot w_{n+2}(0) = a w_{n+2}(0)$ for some $a\in \ku$; 
applying $\pi$ to both sides of the last equality  we get that $a=1$, that is
\begin{align}\label{eq:extension-verma2}
g \cdot w_{n+2}(0)  &= w_{n+2}(0).
\end{align}
Finally $x\cdot w_{n+2}(0) \in \smp{n} \cap T^{n} = \ku \zn{n}{0}$, i.~e. 
$x \cdot w_{n+2}(0) = c \zn{n}{0}$ for some $c\in \ku$. If $c= 0$, then the subspace with basis
$w_{n+2}(0),\dots,w_{n+2}(n+2)$ is a submodule isomorphic to $\smp{n+2}$ and the extension is trivial.
If $c \neq 0$, then the extension is isomorphic to $\cE(c^{-1})$. 
\end{proof}

\begin{prop}\label{prop:ext-Ln-Ln-2}
If $S$ is an extension of $\smp{n}$ by $\smp{n-2}$, then $S$ is isomorphic either
to $\smp{n}\oplus \smp{n-2}$ or to $\usrl{n-2,1}^*$.  Also, 
\begin{align} \label{eq:ext-Ln-Ln-2}
\dim \Ext^1_{\D}(\smp{n},\smp{n-2})= 1. 
\end{align}
\end{prop}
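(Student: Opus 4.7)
The plan is to reduce this statement to Proposition \ref{prop:ext-Ln-Ln+2} via the duality principle from Remark \ref{rem:extensions-dual}.

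First, since $\smp{p}^* \simeq \smp{p}$ for every $p \in \N_0$, Remark \ref{rem:extensions-dual} gives a natural isomorphism
\begin{align*}
\Ext^1_{\D}(\smp{n},\smp{n-2}) \simeq \Ext^1_{\D}(\smp{n-2},\smp{n}).
\end{align*}
By Proposition \ref{prop:ext-Ln-Ln+2} (applied with $n$ replaced by $n-2$), the right-hand side is one-dimensional, which immediately yields the dimension formula \eqref{eq:ext-Ln-Ln-2}.

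Second, to identify the non-trivial extension, I would take any extension $\xymatrix@C=0.4cm{\smp{n} \ar@{^{(}->}[r] & S \ar@{->>}[r] & \smp{n-2}}$ and dualize it to obtain an extension $\xymatrix@C=0.4cm{\smp{n-2} \ar@{^{(}->}[r] & S^{*} \ar@{->>}[r] & \smp{n}}$. By Proposition \ref{prop:ext-Ln-Ln+2}, $S^{*}$ is isomorphic either to $\smp{n-2} \oplus \smp{n}$ (if the dualized sequence splits) or to $\usrl{n-2,1}$. Dualizing once more and using $(\smp{n-2} \oplus \smp{n})^{*} \simeq \smp{n-2} \oplus \smp{n}$, I recover $S \simeq \smp{n} \oplus \smp{n-2}$ in the split case, and $S \simeq \usrl{n-2,1}^{*}$ in the non-split case. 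Note that $S^{*}$ splits if and only if $S$ does, since $(-)^{*}$ is an exact contravariant auto-equivalence on $\lmod{\D}$.

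The main point to be careful about is merely bookkeeping: verifying that the contravariant equivalence $(-)^{*}$ sends trivial extensions to trivial ones, and that $\usrl{n-2,1}^{*}$ is not isomorphic to $\smp{n}\oplus\smp{n-2}$ (which is clear since $\usrl{n-2,1}$ is indecomposable by Section \ref{sec:hwr1}, hence so is its dual). No new computations on the generators and relations of $\D$ are required; the entire content is packaged inside Proposition \ref{prop:ext-Ln-Ln+2} together with duality.
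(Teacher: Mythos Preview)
Your proposal is correct and follows essentially the same approach as the paper, which simply writes ``By Remark \ref{rem:extensions-dual} and Proposition \ref{prop:ext-Ln-Ln+2}.'' You have merely unpacked this one-liner with the appropriate bookkeeping details.
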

\pf
By Remark \ref{rem:extensions-dual} and Proposition \ref{prop:ext-Ln-Ln+2}.
\epf

\subsection{The quiver and representation type}
Propositions \ref{prop:Sn}, \ref{prop:ext-Ln-Ln+2} and \ref{prop:ext-Ln-Ln-2} give us 
the {\it Gabriel quiver} of $\D$, i.e.,  the quiver $\Ext Q(\D)$ with vertices $\N_0$ 
and $\dim \Ext^{1}_{\D}(S_i,S_j)$ arrows from the vertex $i$ to the vertex $j$. That is,

\begin{align}\label{eq:quiver}  
\begin{aligned}
\xymatrix@C=0.7cm{\underset{0} {\circ} \ar@/^0.5pc/[rr] \ar@(ul,ur) &&  
\underset{2} {\circ}\ar@/^0.5pc/[ll] \ar@/^0.5pc/[rr] \ar@(ul,ur)
&&  \underset{4} {\circ}\ar@/^0.5pc/[ll] \ar@(ul,ur)&\dots & \underset{2n} {\circ} \ar@/^0.5pc/[rr] \ar@(ul,ur)
&&  \underset{2n + 2} {\circ}\ar@/^0.5pc/[ll]  \ar@(ul,ur)\dots 
\\ 
\underset{1} {\circ} \ar@/^0.5pc/[rr] \ar@(ul,ur)&&  \underset{3} {\circ}\ar@/^0.5pc/[ll] \ar@/^0.5pc/[rr] \ar@(ul,ur)
&&  \underset{5} {\circ}\ar@/^0.5pc/[ll]  \ar@(ul,ur) &\dots & \underset{2n-1} {\circ} \ar@/^0.5pc/[rr] \ar@(ul,ur)
&&  \underset{2n + 1} {\circ}\ar@/^0.5pc/[ll] \ar@(ul,ur) \dots}
\end{aligned}
\end{align} 

From the analysis of this quiver one concludes:

\begin{prop}\label{prop:D-has-wild-rep-type}
The algebra $\D$ has wild representation type.
\end{prop}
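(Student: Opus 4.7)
My plan is to reduce to a small wild subquiver of the Gabriel quiver and lift wildness up to $\D$. From \eqref{eq:quiver} and Propositions \ref{prop:Sn}, \ref{prop:ext-Ln-Ln+2}, \ref{prop:ext-Ln-Ln-2}, I read off that every vertex $n\in \N_0$ carries a loop and every pair of adjacent vertices $\{n,n+2\}$ is linked by two anti-parallel arrows. Thus, fixing any such adjacent pair, the restriction of $\Ext Q(\D)$ to $\{n,n+2\}$ is a quiver $Q$ with two vertices, a loop at each, and one arrow in each direction between them.

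The first step is to observe that $Q$ itself is of wild type. A direct computation of the Tits form gives
\begin{align*}
q_Q(d_1,d_2)\;=\;d_1^2+d_2^2-d_1^2-d_2^2-2d_1d_2\;=\;-2d_1d_2,
\end{align*}
which is indefinite on the positive cone. The underlying graph of $Q$ is therefore neither Dynkin nor Euclidean, so by the classical criterion (Nazarova--Donovan--Freislich, Drozd) the path algebra $\ku Q$ has wild representation type.

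Next, I would exhibit a full exact embedding of $\lmod{\ku Q}$, or equivalently of $\lmod{\ku\langle X,Y\rangle}$, into the thick subcategory $\cC\subseteq \lmod{\D}$ whose Jordan--H\"older constituents lie in $\{\smp n,\smp{n+2}\}$. Given a representation with spaces $V_n,V_{n+2}$, loops $\alpha\in\End V_n,\beta\in\End V_{n+2}$, and bridging maps $\gamma\colon V_n\to V_{n+2}$, $\delta\colon V_{n+2}\to V_n$, one glues a $\D$-module on $V_n\otimes\smp n\oplus V_{n+2}\otimes\smp{n+2}$ by implementing the loops through the self-extension classes realised by the family $\usrldos{\bullet}{1}$ of Proposition~\ref{prop:Sn}, and implementing the two bridging arrows through the extensions $\usrl{n,1}$ and $\usrl{n,1}^{*}$ produced in Propositions \ref{prop:ext-Ln-Ln+2} and \ref{prop:ext-Ln-Ln-2}. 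Functoriality, exactness and injectivity on morphisms are then straightforward bookkeeping.

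The main obstacle is verifying that this assembly produces \emph{non-isomorphic} indecomposables from non-equivalent data, i.e.\ that no hidden relations in the Ext-algebra of $\cC$ collapse $Q$ to a tame quiver. Concretely, one must rule out relations among the loop classes and the bridging classes; equivalently, one must produce, for each size $k$, a family of indecomposable $\D$-modules parameterized by $(X,Y)\in\mathrm{Mat}_k(\ku)^2$ modulo simultaneous conjugation, any two non-conjugate pairs giving non-isomorphic modules. Once this family is displayed using the building blocks from Sections \ref{sec:hwr1} and \ref{sec:hwr2}, wildness of $\D$ follows directly from the definition. A pragmatic shortcut is to argue via a suitable idempotent truncation $e\D e$ that reproduces $\ku Q$ modulo an admissible ideal not touching the wildness, since full subcategories of $\lmod{\D}$ of wild type force $\D$ itself to be wild.
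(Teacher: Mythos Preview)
Your plan identifies the right ingredients but leaves the decisive step unfinished. Knowing that the path algebra $\ku Q$ of the two-vertex subquiver is wild does not, by itself, say anything about $\D$: the Gabriel quiver records only $\Ext^1$, not the relations, and an algebra whose Ext-quiver contains a wild subquiver can still be tame. You acknowledge this (``the main obstacle is verifying \ldots no hidden relations \ldots collapse $Q$''), but you do not resolve it; the gluing construction is only sketched, and the idempotent-truncation shortcut is not available here since $\D$ is infinite-dimensional and has no idempotents picking out the $\smp{n}$.

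The paper circumvents exactly this obstacle. It first produces a \emph{finite-dimensional quotient} $B$ of $\D$ whose Ext-quiver is the chosen full subquiver (this is Claim~\ref{claim:bijective-map-quivers}: take the image of $\D$ in $\End M$ for $M$ the direct sum of the relevant simples and one representative of each $\Ext^1$-class). Passing to the basic algebra $C$ of $B$ and then to $C/\mathfrak r^{2}$ kills all relations, so representation type is now determined purely by the separated quiver via \cite[Theorem~X.2.6]{ARS}. Wildness then propagates back along the chain of quotients $C/\mathfrak r^{2}\leftarrow C\sim B\leftarrow \D$.

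One further point: your two-vertex choice $\{n,n+2\}$ would \emph{not} work in the paper's argument. The separated quiver of that piece is the $4$-cycle $\widetilde{A}_{3}$, which is affine, so $C/\mathfrak r^{2}$ would only be tame. This is why the paper takes three vertices $F=\{0,2,4\}$: the resulting separated quiver is a $6$-cycle with an extra chord, hence neither Dynkin nor affine, and the wildness criterion applies.
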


\pf  This is evident for experts in representation theory of artin algebras but we include a proof for completeness.

\begin{step}\label{claim:injective-map-quivers} 
Let $A \twoheadrightarrow  B$ be a surjective map of algebras and $M, N \in \lmod{B}$. Then
the  canonical map $\Ext_B^1(M, N) \to  \Ext_A^1(M, N)$ is injective. 
\end{step}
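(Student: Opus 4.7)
The plan is to reduce the claim to the elementary observation that when $M$ and $N$ are $B$-modules, any $A$-linear map $M \to N$ (with $A$ acting through the surjection $A\twoheadrightarrow B$) is automatically $B$-linear. Once that is in place, showing that an $A$-split extension of $B$-modules is already $B$-split becomes essentially immediate, and injectivity of the comparison map follows.

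First I would fix notation: write $p\colon A\twoheadrightarrow B$ for the surjection and view any $B$-module $X$ as an $A$-module by the rule $a\cdot x\coloneqq p(a)\cdot x$. A class in $\Ext_B^1(M,N)$ is represented by an exact sequence
\[
\xymatrix{N\ar@{^{(}->}[r]^{\iota} & E\ar@{->>}[r]^{\pi} & M}
\]
in $\lmod{B}$, and its image in $\Ext_A^1(M,N)$ is the same sequence regarded as an extension of $A$-modules. Injectivity then amounts to the statement that if this sequence splits over $A$, it already splits over $B$.

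Second, I would establish the key lemma: if $X,Y\in\lmod{B}$, then $\Hom_A(X,Y)=\Hom_B(X,Y)$. The inclusion $\supseteq$ is clear. For the reverse inclusion, take an $A$-linear map $f\colon X\to Y$ and $b\in B$; by surjectivity of $p$ there exists $a\in A$ with $p(a)=b$, and then for $x\in X$ one computes $f(b\cdot x)=f(p(a)\cdot x)=f(a\cdot x)=a\cdot f(x)=p(a)\cdot f(x)=b\cdot f(x)$, so $f$ is $B$-linear. This is the only place the surjectivity hypothesis is used.

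Finally, suppose $[E]$ lies in the kernel of the canonical map. Then there is an $A$-linear section $s\colon M\to E$ of $\pi$, meaning $\pi\circ s=\id_M$. Since $M$ and $E$ are $B$-modules, the lemma forces $s$ to be $B$-linear as well, so $s$ is a $B$-module splitting of the original sequence. Hence $[E]=0$ in $\Ext_B^1(M,N)$, proving injectivity. There is no real obstacle here — the entire content is the surjectivity-plus-factorization observation — and the same argument would work with chain-complex or Baer-sum models of $\Ext^1$ interchangeably, so I would simply pick the Baer-extension presentation already implicit in the paper's conventions.
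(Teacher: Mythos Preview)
Your argument is correct and is the standard one: the key observation that $\Hom_A(X,Y)=\Hom_B(X,Y)$ for $B$-modules $X,Y$ (by surjectivity of $A\twoheadrightarrow B$) immediately forces any $A$-linear section to be $B$-linear, so an $A$-split extension of $B$-modules is already $B$-split.

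There is nothing to compare against: the paper states this claim without proof, treating it as folklore (the surrounding proposition is prefaced with ``This is evident for experts in representation theory of artin algebras''), and the text that follows the two claims is the proof of Claim~2, not Claim~1. Your write-up supplies exactly the routine verification the authors omitted.
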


\begin{step}\label{claim:bijective-map-quivers} 
Let now $A$ be a (possibly infinte-dimensional) algebra over a field $\ku$ with Ext-quiver $Q$  such thst 
$\dim_{\ku} \Ext_A^1(L, L') < \infty$ for any $L,  L' \in \Irr A$. 
Let $F$ be a finite subset of $\Irr A$ and let $Q_F$ be the  (full) subquiver of $Q$ spanned by $F$.
Then there exists a finite-dimensional quotient algebra $A \twoheadrightarrow  B$ such that the Ext-quiver of $B$ is 
isomorphic to $Q_F$.
\end{step}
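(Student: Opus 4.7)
The plan is to realize $B$ as the image of $A$ in $\End_\ku(M)$ for a carefully chosen finite-dimensional module $M$ that ``sees'' every simple in $F$ together with a basis of each Ext-group among them. Concretely, assuming (as the statement tacitly requires) that each $L\in F$ is finite-dimensional, for every ordered pair $(L,L')\in F\times F$ I fix non-split extensions $L'\hookrightarrow E_{L,L'}^{(i)}\twoheadrightarrow L$, $1\leq i\leq d_{L,L'}\coloneqq\dim_\ku\Ext_A^1(L,L')$, whose classes form a basis of $\Ext_A^1(L,L')$, and set
\[
M \;\coloneqq\; \bigoplus_{L\in F} L \;\oplus\; \bigoplus_{(L,L')\in F\times F}\,\bigoplus_{i=1}^{d_{L,L'}} E_{L,L'}^{(i)}.
\]
This is finite-dimensional, since $F$ is finite and every $d_{L,L'}$ is finite by hypothesis. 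Set $I=\operatorname{ann}_A(M)$ and $B=A/I$; the induced faithful action embeds $B$ into $\End_\ku(M)$, so $B$ is finite-dimensional.

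Next I would identify $\Irr B$ with $F$. Each $L\in F$ is annihilated by $I$ (being a direct summand of $M$), hence defines a $B$-module that remains simple; distinct elements of $F$ yield non-isomorphic $B$-modules, so $F$ embeds in $\Irr B$. For the reverse inclusion, let $S\in\Irr B$ and let $e\in B$ be a primitive idempotent corresponding to $S$. Since $B$ acts faithfully on $M$, one has $eM\neq 0$; via the identification $eM=\Hom_B(Be,M)$ and the fact that $Be$ is the projective cover of $S$, any non-zero morphism $Be\to M$ has image with head $S$, so $S$ is a composition factor of $M$. But by construction the composition factors of $M$ all lie in $F$, giving $\Irr B=F$.

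Finally I would check that the canonical map $\Ext_B^1(L,L')\to\Ext_A^1(L,L')$ from Claim \ref{claim:injective-map-quivers} is an isomorphism for every $L,L'\in F$. Injectivity is already in hand. For surjectivity, each basis class $[E_{L,L'}^{(i)}]$ is represented by an extension which, as a summand of $M$, is killed by $I$ and is therefore a $B$-module, so it lies in the image. Spanning then gives surjectivity. Combining the vertex bijection with these identifications yields $\Ext Q(B)\simeq Q_F$.

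The main obstacle is the ``no extraneous simples'' half of the vertex identification. It rests on the standard (but non-trivial) fact that a faithful module over a finite-dimensional algebra realizes every simple of that algebra as a composition factor; the argument via primitive idempotents and projective covers sketched above handles this cleanly. Everything else is essentially formal once $M$ has been built to absorb one representative of each basis extension class among the simples of $F$.
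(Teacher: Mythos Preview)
Your proposal is correct and follows essentially the same construction as the paper: build $M$ as the direct sum of the simples in $F$ together with extensions representing a basis of each $\Ext_A^1(L,L')$, take $B$ to be the image of $A$ in $\End_\ku(M)$, and use Claim~\ref{claim:injective-map-quivers} plus the fact that each chosen extension is a $B$-module to conclude. The only difference is that you carefully justify $\Irr B = F$ via the faithful-module/primitive-idempotent argument, a point the paper's terse proof leaves implicit.
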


Given $L,  L' \in F$, pick a basis $(v_i)$ of $\Ext_A^1(L, L')$ and for each $v_i$ an extension $M_i$  of $L$ by $L'$
representing $v_i$. 
Let $M$ be the direct sum of all $L, L'$ in $F$ and all the corresponding $M_i $. Clearly 
$\dim M< \infty$ hence so is the image $B$ of the representation $A \to \End M$. 
By construction and Claim \ref{claim:injective-map-quivers}  the  canonical map 
$\Ext_B^1(L, L') \to  \Ext_A^1(L, L')$ is bijective, hence the Ext-quiver of $B$ is isomorphic to $Q_F$.

By Claim \ref{claim:bijective-map-quivers} applied to $F = \{0,2,4\}$ there exists a surjective algebra map 
$\D \twoheadrightarrow  B$ where $\dim B < \infty$  and the Ext-quiver of $B$ is 
isomorphic to 

\begin{align}\label{eq:QF}
\xymatrix@C=0.7cm{\underset{0} {\circ} \ar@/^0.5pc/[rr] \ar@(ul,ur) &&  
\underset{2} {\circ}\ar@/^0.5pc/[ll]   \ar@(ul,ur)  \ar@/^0.5pc/[rr]&& \underset{4} {\circ} \ar@/^0.5pc/[ll] \ar@(lu,ru)}
\end{align}
Let $C$ be the basic algebra  which is Morita equivalent to $B$ and
 $\mathfrak r = \operatorname{rad} C$. Then $C/ \mathfrak r^2$
has finite or tame representation type if and only if  the separated quiver $\Gamma_s$ of \eqref{eq:QF} is a disjoint union
of Dynkin and affine Dynkin diagrams, see \cite[Theorem X.2.6]{ARS}. But $\Gamma_s$ has the form
\begin{align*}
\xymatrix@C=0.7cm{\underset{0} {\circ} \ar[rr] \ar[d]  &&  
\underset{2'} {\circ}     && \underset{4} {\circ} \ar[ll] \ar[d]
\\
\underset{0'} {\circ}   &&  
\underset{2} {\circ}\ar[ll] \ar[rr]   \ar[u]  && \underset{4'} {\circ} }
\end{align*}
Thus $C/ \mathfrak r^2$ has wild representation type, and \emph{a fortiori} $C$, $B$ and $\D$ also. 
\epf

\subsection*{Acknowledgements} N. A. thanks Hennig Krause and Bernhard Keller for enlightening discussions.

\end{document}